\newtheorem{theorem}{Theorem}
\newtheorem{corollary}[theorem]{Corollary}
\newtheorem{lemma}[theorem]{Lemma}
\newtheorem{definition}{Definition}
\newtheorem{remark}{Remark}
\newcommand{\N}{\mathbb{N}}
\newcommand{\D}{\mathbb{D}}
\newcommand{\B}{\mathbb{B}}
\def\bbr{{\mathbb{R}}}
\def\bbe{{\mathbb{E}}}
\def\x{{\bf x}}
\def\y{{\bf y}}
\def\z{{\bf z}}
\def\v{{\bf v}}
\def\u{{\bf u}}
\def\q{{\bf q}}
\def\a{{\bf a}}
\begin{document}

\title{Differential Stochastic Variational Inequalities with Parametric Optimization}

\author{
{ Xiaojun Chen\footnote{Department of Applied Mathematics, The Hong Kong Polytechnic University, Kowloon, Hong Kong (\texttt{maxjchen@polyu.edu.hk}).},
\ Jian Guo\footnote{Department of Applied Mathematics, The Hong Kong Polytechnic University, Kowloon, Hong Kong (\texttt{jiguo@polyu.edu.hk}).}
\ and Guan Wang\footnote{Department of Applied Mathematics, The Hong Kong Polytechnic University, Kowloon, Hong Kong (\texttt{guan1.wang@polyu.edu.hk}).}
}}

\maketitle
\begin{abstract} \noindent
The differential stochastic variational inequality  with parametric convex optimization (DSVI-O) is an ordinary differential equation whose right-hand side involves a stochastic variational inequality and solutions of several dynamic and random parametric convex optimization problems.    We consider that the distribution of the random variable is time-dependent and assume that the involved functions are continuous and the expectation is well defined.
We show that the DSVI-O has a weak solution with integrable and measurable solutions of the
parametric optimization problems. Moreover, we propose a discrete scheme of
DSVI-O by using a time-stepping approximation and  the sample average approximation and prove the convergence of the discrete scheme.  We illustrate our theoretical results of DSVI-O with applications in an embodied intelligence system for the elderly health by synthetic health care data generated by Multimodal Large Language Models.
\\\\
\noindent
{\em MSC codes\/}: 90C15, 90C33, 90C39
\\\\
\noindent {\em Key words\/}: Differential variational inequality, stochastic variational inequality, parametric optimization, weak solution, measurability,  sample average approximation, time-stepping scheme.
\end{abstract}

\section{Introduction}
 Let $X \subset \mathbb{R}^n$ be a nonempty closed-convex set. At each time $t \in \mathbb{R}_+$, uncertainty is represented by a random vector $\xi: \Omega \to \mathbb{R}^\ell$, defined on a probability space $(\Omega, \mathcal{F}, \mathbb{P}_t)$ with time-dependent distribution $P_t := \mathbb{P}_t \circ \xi^{-1}$ on a support set $\Xi \subseteq \mathbb{R}^\ell$.
 We consider the following differential stochastic variational inequality with parametric convex optimization  (DSVI-O)
 \begin{equation}\label{DSTO_first_stage}
\left\{\begin{aligned}
\dot{x}(t) &= \Pi_X\left(x(t) - \mathbb{E}_{P_t}\left[\Phi\left(t, \xi, x(t), y(t, \xi)\right)\right]\right) - x(t),\quad t\ge 0,\\
x(0)&=x_0,
\end{aligned}\right.
\end{equation}
\begin{equation}\label{DSTO_second_stage}
\left\{\begin{aligned}
y_i(t, \xi) &\in \mathcal{S}_i(t, \xi, x(t)) := \arg\min_{\mathbf{y}_i \in \mathcal{Y}_i(t,\xi)} f_i(t, \xi, x(t), \mathbf{y}_i), \quad i = 1,\ldots,k,\\
y(t, \xi)&=\left(y_1(t, \xi),\ldots,y_k(t, \xi)\right), \quad {\rm a.e.}\,\,  \xi\in \Xi.
\end{aligned}\right.
\end{equation}
In (\ref{DSTO_first_stage}),
$\Phi : \mathbb{R}_+ \times \mathbb{R}^\ell \times \mathbb{R}^n
        \times \mathbb{R}^m \to \mathbb{R}^n$
is a continuous function and has the form
\begin{equation} \label{eq:special_case_dsto}
\Phi\left(t, \xi, x(t), y(t, \xi)\right)
= \Phi_1\left(t, \xi, x(t)\right) + \sum_{i=1}^k B_i(t, \xi,x(t))\, y_i(t, \xi),
\end{equation}
where $\Phi_1: \mathbb{R}_+ \times \mathbb{R}^\ell \times \bbr^n \to \mathbb{R}^n$ is a continuous function and   $B_i: \mathbb{R}_+ \times \mathbb{R}^\ell \times \bbr^n \to \mathbb{R}^{n \times m_i}$ is a continuous matrix-valued function for each $i \in [k]:= \{1,\dots,k\}$.
The operator $\Pi_X : \mathbb{R}^n \to X$ denotes the Euclidean projection onto $X$.
To compute the function value of $\Phi$ at each time $t\ge 0$ and  event $\xi
\in \Xi$, we need to solve $k$ convex optimization problems in (\ref{DSTO_second_stage}), where for each $i\in [k]$, the objective function
$f_i : \mathbb{R}_+ \times \mathbb{R}^\ell \times \mathbb{R}^n
      \times \mathbb{R}^{m_i} \to \mathbb{R}$
is continuous, and convex in $\mathbf{y}_i$ for any fixed $(t,\xi, x(t))\in \mathbb{R}_+ \times \mathbb{R}^\ell \times \mathbb{R}^n,$ and the feasible set is a
set-valued map
$\mathcal{Y}_i : \mathbb{R}_+ \times \mathbb{R}^\ell
                 \rightrightarrows \mathbb{R}^{m_i}$
taking nonempty closed-convex values
$\mathcal{Y}_i(t,\xi)\subseteq \overline{\mathcal{Y}}_i \subset \mathbb{R}^{m_i}$ for all
$(t,\xi) \in \mathbb{R}_+ \times \mathbb{R}^\ell$.
Here $\overline{\mathcal{Y}}_i $ is a compact set for $i\in [k]$ and $m=\sum^k_{i=1}m_i$.

If $f_i$ is continuously differentiable with respect to $\y$ for each $i\in [k]$, then
(\ref{DSTO_second_stage}) can be equivalently rewritten as the following stochastic variational inequality
\begin{equation}\label{DSTO_second_VI}
0 \in \Psi(t, \xi, x(t),y(t, \xi)) +{\cal N}_{{\cal Y}(t,\xi)}(y(t,\xi)),
\end{equation}
where
$$\begin{aligned}
\Psi(t, \xi, x(t), y(t, \xi))= \left( \begin{array}{c}
\nabla_{\mathbf{y}_1} f_1(t, \xi, x(t), y_1(t, \xi))\\
\vdots\\
\nabla_{\mathbf{y}_k} f_k(t, \xi, x(t), y_k(t, \xi))
  \end{array}\right),
\end{aligned}
$$
${\cal Y}(t,\xi)={\cal Y}_1(t,\xi)\times\ldots \times {\cal Y}_k(t,\xi)$ and ${\cal N}_{{\cal Y}(t,\xi)}(y(t,\xi))={\cal N}_{{\cal Y}_1(t,\xi)}(y_1(t,\xi))\times\ldots \times {\cal N}_{{\cal Y}_k(t,\xi)}(y_k(t,\xi))$ is the normal cone of ${\cal Y}(t,\xi)$ at $y(t,\xi).$
When the distribution $P$ and feasible set ${\cal Y} $ are time-independent, then the DSVI-O (\ref{DSTO_first_stage})-(\ref{DSTO_second_stage}) reduces to the DSVI \cite{chen2022dynamic}
\begin{equation}\label{DSVI}
\tag{DSVI}
\left\{\begin{aligned}
\dot{x}(t) &= \Pi_X\left(x(t) - \mathbb{E}\left[\Phi\left(t, \xi, x(t), y(t, \xi)\right)\right]\right) - x(t),\quad t\ge 0,\\
x(0)&=x_0,\\
0&\in \Psi(t, \xi, x(t),y(t, \xi)) +{\cal N}_{{\cal Y}(\xi)}(y(t,\xi)), \quad {\rm a.e.}\,\, \xi\in \Xi.
\end{aligned}\right.
\end{equation}
The existence of a solution for the DSVI in the space of continuously differentiable functions with the space of measurable functions as well as convergence of Sample Average Approximation (SAA) and time-stepping approximation of DSVI are proved under the assumption that (\ref{DSTO_second_VI}) has a unique solution $y(t,\xi)$ for any  $t\ge 0$, $x(t)\in \bbr^n$ and a.e. (almost every) $\xi \in \Xi$ and $\mathbb{E}\left[\Phi\left(t, \xi, x(t), y(t, \xi)\right)\right]$ is locally Lipschitz continuous at any given $(t,x(t))\in \bbr_+\times \bbr^n$ in \cite{chen2022dynamic}.

If the (\ref{DSVI}) does not involve random variables, then it reduces  to the deterministic differential variational inequality (DVI) \cite{camlibel2007lyapunov,chen2013,chen2014,ChenXiang,JSPang2010,PangDVI}
\begin{equation}\label{DVI}
\tag{DVI}
\left\{\begin{aligned}
\dot{x}(t) &= \Pi_X\left(x(t) - \Phi(t, x(t), y(t))\right) - x(t),\quad t\ge 0,\\
x(0)&=x_0,\\
0&\in \Psi(t,x(t),y(t)) +{\cal N}_{{\cal Y}}(y(t)).
\end{aligned}\right.
\end{equation}
If the (\ref{DSVI}) only seeks an equilibrium point at which $\dot{x}(t)=0$, then
it reduces to the  static two-stage stochastic variational inequality (SVI) \cite{ChenLYZ,ChenTKW2017,ChenSS,RW}
\begin{equation}\label{SVI}
\tag{SVI}
\left\{\begin{aligned}
0 &\in \mathbb{E}\left[\Phi\left(\xi, x, y(\xi)\right)\right]+{\cal N}_{X}(x),\\
0 &\in \Psi(\xi, x,y(\xi))+ {\cal N}_{{\cal Y}(\xi)}(y(\xi)), \quad {\rm a.e.}\,\, \xi\in \Xi.
\end{aligned}\right.
\end{equation}

If $X=\bbr^n$, and the objective functions and constraints in (\ref{DSTO_second_stage}) are deterministic,  then (\ref{DSTO_first_stage})-(\ref{DSTO_second_stage}) reduces to the optimization-constrained ordinary differential equation (OCODE) \cite{LCH2009,LuoChen}
 \begin{equation}\label{DO}
 \tag{OCODE}
\left\{\begin{aligned}
\dot{x}(t) &= \Phi(t, x(t), y(t)),\quad t\ge 0,\\
x(0)&=x_0,\\
y(t) &\in \mathop{\arg\min}_{\mathbf{y}\in \mathcal{Y}(t)} f(t, x(t), \mathbf{y}).
\end{aligned}\right.
\end{equation}

The (\ref{DSVI}), (\ref{DVI}), (\ref{SVI}) and (\ref{DO}) have many applications in transportation sciences, atmospheric chemistry, biological systems and economics and have been studied extensively in the last decades. Nowadays, applications of Artificial Intelligence (AI) use static, dynamic and random data in different forms including images, text and sound, etc \cite{liu2025embodied}. {Moreover, practical systems often operate in time-varying and uncertain environments.} For example, embodied intelligence systems for the elderly health. See details in Section 5.
{However, the (\ref{DSVI}), (\ref{DVI}), (\ref{SVI}) and (\ref{DO}) have limitations to handle data from different sources in different forms under time-varying uncertainty simultaneously.} Motivated by existing research on  (\ref{DSVI}), (\ref{DVI}), (\ref{SVI}) and (\ref{DO}), and new applications in AI, in this paper, we study the DSVI-O (\ref{DSTO_first_stage})-(\ref{DSTO_second_stage}) where $k$
optimization problems in (\ref{DSTO_second_stage}) handle different learning problems simultaneously.
These optimization problems are not necessarily differentiable and strongly convex. Thus DSVI-O (\ref{DSTO_first_stage})-(\ref{DSTO_second_stage}) cannot be written as the (\ref{DSVI}), (\ref{DVI}), (\ref{SVI}) or (\ref{DO}) and existing theoretical results cannot be applied to obtain the existence of solutions of DSVI-O (\ref{DSTO_first_stage})-(\ref{DSTO_second_stage}) directly.

 We call a pair
$\left(x^*:[0,T] \to \bbr^n, y^*:[0,T]\times \Xi \to \bbr^{m} \right)$ a \emph{weak solution} of (\ref{DSTO_first_stage})-(\ref{DSTO_second_stage})
if $x^*$ is absolutely continuous in $[0,T]$ and
\begin{equation}\label{solu_x}
x^*(t) = x_0 + \int_0^t \Bigl(\Pi_X(x^*(\tau) - \mathbb{E}_{P_\tau}\left[ \Phi(\tau, x^*(\tau), y^*(\tau, \xi)) \right])-x^*(\tau) \Bigr) \, d\tau,
\end{equation}
and  $y^*$ is measurable in $\xi$ and integrable in~$t$, that is
$$\int^T_0  \bbe_{P_t}[\|y^*(t,\xi)\|] dt<\infty,$$
with $y_i^*(t,\xi) \in \mathcal{S}_i(t, \xi, x^*(t))$ for $i\in [k]$.

The main contributions of this paper are summarized as follows.
\begin{description}
\item{1.} We prove the existence of a weak solution of (\ref{DSTO_first_stage})-(\ref{DSTO_second_stage}) for any $T>0$ without assuming the differentiability of the objective functions $f_i(t,\xi,x(t), \cdot)$ and uniqueness of solutions $y_i(t,\xi)$ for $i\in [k].$  When $f_i$ is strongly convex in $\y$, we show that (\ref{DSTO_first_stage})-(\ref{DSTO_second_stage})
    has a classical solution $(x^*,y^*)$ with $x^*\in C^1[0,T]$ and $y^*(\cdot,\xi)$ is continuous in $[0,T]$ for a.e. $\xi \in \Xi$. Moreover,
         for any weak solution $(x^*(t), y^*(t,\xi))$, we show that $x^*(t)\in X$ for any $t\in[0,T]$ if $x_0\in X$.
    \item{2.} We propose a discrete approximation scheme for (\ref{DSTO_first_stage})-(\ref{DSTO_second_stage}), which  first uses the time-stepping method based on the forward Euler scheme over a uniform time grid to the the ordinary differential equation (ODE) in \eqref{DSTO_first_stage}
        and then uses the SAA  to approximate the expectation value at each discrete time point.
        We give convergence theorems of the  discrete approximation scheme to a weak solution of (\ref{DSTO_first_stage})-(\ref{DSTO_second_stage}).
        \item{3.} We generate synthetic health care data by Multimodal Large Language Model, and test (\ref{DSTO_first_stage})-(\ref{DSTO_second_stage}) with the proposed discrete approximation scheme for problems in the embodied intelligence system for the elderly health. Preliminary results show the efficiency and effectiveness of the DSVI-O model.
\end{description}

\subsection{Preliminaries}

In this subsection, we introduce the notation and basic definitions used throughout the paper.

\paragraph{Notation}
Let $\|\cdot\|$ denote the Euclidean norm of a vector or a matrix. Let $\bbr_+$ denote the set of all nonnegative real numbers.
Given a convex set $X \subseteq \mathbb{R}^{n}$ and $\z\in\bbr^n$,  ${\rm dist}(\z, X) := \min_{\x \in X} \|\x - \z\|$ denotes the distance from $\z$ to $X$. Given two sets \( A, C \subseteq \mathbb{R}^n \),
$
\D(A, C) := \sup_{\x \in A} \operatorname{dist}(\x, C)
$
denotes the deviation of the set \( A \) from the set \( C \). Given an $\x\in X$,
${\cal N}_X(\x)=\{\v:  \v^\top(\x-\z)\le 0, \forall \z\in X\}$ denotes the normal cone of $X$ at $\x$, and
${\cal T}_X(\x)= \{\v: \exists \, \z^k \overset{X}{\rightarrow} \x, \lambda^k\downarrow 0 \,\,
\mbox{such that } \, \lim_{k\to \infty} \frac{\z^k-\x}{\lambda^k}=\v\}$ denotes the tangent cone to $X$ at $\x$.  Let $C_b(X)$ denote the set of real-valued bounded continuous functions on $X$, and  $\operatorname{conv}(C)$ denote the convex hull of a set $C$.

\begin{definition}[Upper/lower semicontinuous {\rm \cite{aubin2009differential}}]
     We say that a set-valued map $F: \mathbb{R}^{n} \rightrightarrows \mathbb{R}^{m}$ is {upper semicontinuous} (u.s.c.) if for any \( \x_0 \in \mathbb{R}^{n} \), for any open set \( \mathcal{D} \) containing \( F(\x_0) \), there exists a neighborhood \( \mathcal{N}_{\x_0} \) of \( \x_0 \) such that for any $\x\in \mathcal{N}_{\x_0} $, \( F(\x) \subset \mathcal{D} \).
    Similarly, we say that \( F \) is {lower semicontinuous} (l.s.c.) if for any \( \x_0 \in \mathbb{R}^{n} \), for any \( \y_0 \in F(\x_0) \) and any neighborhood \( \mathcal{N}_{\y_0} \) of  \( \y_0 \), there exists a neighborhood \( \mathcal{N}_{\x_0} \) of \( \x_0 \) such that  for any
$\x \in \mathcal{N}_{\x_0}, \ F(\x) \cap \mathcal{N}_{\y_0} \neq \emptyset.
$  Moreover, $F$ is said to be continuous if and only if it is both u.s.c. and l.s.c.
\end{definition}

\begin{definition}[Measurability {\rm \cite{aubin1999set}}]
Consider a measurable space $(\mathbb{R}^{m}, \mathcal{B}({\mathbb{R}^{m}}))$ and a set-valued map $F : \mathbb{R}^{m} \rightrightarrows \mathbb{R}^{n}$ with closed images. The map $F$ is called \emph{measurable} if for every open subset $\mathcal{O} \subset \mathbb{R}^{n}$, we have
$
F^{-1}(\mathcal{O}) := \{\omega \in \mathbb{R}^{m} : F(\omega) \cap \mathcal{O} \neq \emptyset\} \in \mathcal{B}({\mathbb{R}^{m}}).
$
\end{definition}

\begin{definition}[Random closed set {\cite{molchanov2005theory}}] For a probability space $(\Omega, \mathcal{F}, \mathbb{P})$,
a set-valued map $\zeta: \Omega \rightrightarrows {\mathbb{R}^{n}}$ is called a \emph{random closed set} if, for every compact set $\mathcal{O} \subseteq \mathbb{R}^{n}$,
$
\{\omega \in \Omega : \zeta(\omega) \cap \mathcal{O} \neq \emptyset\}\in \mathcal{F}
$.
\end{definition}

\begin{definition}[Integrably bounded {\rm\cite{molchanov2005theory}}]\label{def_integrably bounded}
 A random closed set \( \zeta \) is said to be \emph{integrably bounded} if
 $
  \|\zeta\| := \sup\{\, \|\varsigma\| : \varsigma \in \zeta \,\}
$
  has a finite expectation, i.e., \( \mathbb{E}[\|\zeta\|] < \infty \).
\end{definition}

\begin{definition}[Integrable selections {\rm\cite{molchanov2005theory}}]
Let $\zeta:\Omega\rightrightarrows \mathbb{R}^{n}$ be a random closed set. The family of integrable selections of \( \zeta \) is defined as
$
L^1(\zeta) := L^0(\zeta) \cap L^1(\mathbb{R}^{n}),
$
where \( L^0(\zeta) \) denotes the family of all measurable selections of \( \zeta \), and \( L^1(\mathbb{R}^{n}) \) is the space of measurable random elements $\xi$ with $\mathbb{E}[\|\xi\|]<\infty$.
\end{definition}


The rest of this paper is organized as follows. In Section 2, we prove the existence of a weak solution of    (\ref{DSTO_first_stage})-(\ref{DSTO_second_stage}) and the feasibility of any weak solution    of (\ref{DSTO_first_stage})-(\ref{DSTO_second_stage}). In Section 3, we study the discrete scheme for  (\ref{DSTO_first_stage})-(\ref{DSTO_second_stage}) and prove its convergence.  In Section 4, we illustrate the theoretical results on the existence of solutions and discrete scheme with numerical examples.    In Section 5, we study applications of DSVI-O in the embodied intelligence system for the elderly health by synthetic health care data generated by Multimodal Large Language Models. In Section 6, we present concluding remarks.

\section{Existence of weak solutions of (\ref{DSTO_first_stage})–(\ref{DSTO_second_stage})}
The right-hand side of the ODE  in (\ref{DSTO_first_stage}) involves $k$ solutions $y_i(t,\xi), i\in [k]$ of the $k$ optimization problems in (\ref{DSTO_second_stage}) for $t>0$ and a.e. $\xi\in\Xi$. Denote
the joint solution set of the optimization problems by
\[
\mathcal{S}(t,\xi,x(t)) \;=\; \mathcal{S}_1(t,\xi,x(t)) \times \mathcal{S}_2(t,\xi,x(t)) \times \cdots \times \mathcal{S}_k(t,\xi,x(t)).
\]
For $T>0$, define a set-valued map $F:[0,T]\times \bbr^n
\rightrightarrows \bbr^n$ as
\begin{equation}\label{F_t_x}
F(t,x(t)) = \mathbb{E}_{P_t}\big\{\, \Phi(t,\xi,x(t),y(t,\xi)) \;:\; y(t,\xi) \in \mathcal{S}(t,\xi,x(t)) \,\big\}.
\end{equation}
Then \eqref{DSTO_first_stage}
can be rewritten as:
\begin{equation}\label{ODE1}
\dot{x}(t) \;\in\;H(t,x(t)):= \Big\{\,\Pi_X\!\big(x(t) - \varphi\big) - x(t) : \varphi \in F(t,x(t)) \,\Big\}.
\end{equation}
To ensure that $H$ is a convex-valued map, in this section, we assume that $n=1$ or $X=\{x:Ax=b\}$ with a given matrix $A\in \bbr^{m\times n}$ and a vector $b \in\bbr^m$. If $A=0$ and $b=0$ then $X=\bbr^n$.
Due to the selection of solutions from stochastic optimization problems in (\ref{DSTO_second_stage}), we cannot expect that (\ref{DSTO_first_stage}) has a differentiable solution $x$ for any $T>0$ without strong reasonable conditions.  One may ask if we can
select the least-norm solution
$$\bar{y}(t,\xi)= \arg\min_{\y\in \mathcal{S}(t,\xi,x(t))} \|\y\|^2.$$
Since the solution set is convex,  there is a unique $\bar{y}(t,\xi)$ for any
$t\in [0, T],$ a.e. $ \xi\in \Xi.$ However, even if  \( \mathcal{S}(t, \xi, \x) \) is Lipschitz continuous as a set-valued map in Hausdorff distance, the least-norm solution $\bar{y}(t,\xi)$ may fail to be Lipschitz continuous in $(t,\x)\in[0,T]\times \bbr^n$, as illustrated by \cite[Counterexample, pp. 70-72]{aubin2009differential}.
Moreover, the solution set \( \mathcal{S}(t, \xi, \x) \) is rarely Lipschitz continuous in general. Under typical assumptions, one can at most expect upper semicontinuity, which may not even admit a continuous selection  \cite[pp.~39]{aubin2009differential}.

In this section, we focus on the  existence of a weak solution of the DSVI-O  (\ref{DSTO_first_stage})–(\ref{DSTO_second_stage}), which allows us to work with integrable and measurable selections while preserving the meaningful evolution of the system state \( x(t) \).

For $x_0\in X$ and $T>0$, we  make the following assumptions.
\begin{enumerate}[label=\textbf{(A\arabic*)}]
\item \label{A1}
$\mathbb{P}_{(\cdot)}(\cdot) : [0,T] \times \mathcal{F} \to [0,1]$ is a \emph{probability kernel} from $[0,T]$ to $\Omega$, that is, for each $A \in \mathcal{F}$, the mapping $t \mapsto \mathbb{P}_t(A)$ is $\mathcal{B}_{[0,T]}$ measurable, and for each $t \in [0,T]$, $\mathbb{P}_t(\cdot)$ is a probability measure on $(\Omega, \mathcal{F})$ \cite{kallenberg1997foundations}.

  \item \label{A2}
There exist measurable functions \(\kappa_{\Phi_1}, \kappa_{B_i} : [0,T] \times \Xi \to \mathbb{R}_+\), \(i \in[k]\), such that their expectations \(\bar{\kappa}_{\Phi_1}(t) := \mathbb{E}_{P_t}[\kappa_{\Phi_1}(t,\xi)]\) and \(\bar{\kappa}_{B_i}(t) := \mathbb{E}_{P_t}[\kappa_{B_i}(t,\xi)]\) are integrable over \([0,T]\), and for all \(\mathbf{x}, \mathbf{x}' \in \mathbb{R}^n\),
\[
\|\Phi_1(t,\xi,\mathbf{x}) - \Phi_1(t,\xi,\mathbf{x}')\| \le \kappa_{\Phi_1}(t,\xi) \|\mathbf{x} - \mathbf{x}'\|, \]
and
\[
\|B_i(t,\xi,\mathbf{x}) - B_i(t,\xi,\mathbf{x}')\| \le \kappa_{B_i}(t,\xi) \|\mathbf{x} -\mathbf{x}'\|.
\]

\item \label{A3} There exists a constant \(\bar{\beta} > 0\) such that, for \(i\in [k]\),
\[
\sup_{t \in [0,T]} \max \left\{ \left\| \mathbb{E}_{P_t}[\Phi_1(t,\xi,x_0)] \right\|, \left\| \mathbb{E}_{P_t}[B_i(t,\xi,x_0)] \right\| \right\} \le \bar{\beta}.
\]
  \item \label{A4} The set-valued map $\mathcal{Y}:[0,T]\times \Xi\rightrightarrows \bar{\cal Y}:= \bar{\cal Y}_1\times \cdots \times \bar{\cal Y}_k\subset \bbr^m$ is continuous.
\end{enumerate}

\begin{remark}
Assumption \ref{A1} requires only that $\mathbb{P}_t$ is a probability kernel from $[0,T]$ to $\Omega$, without imposing any continuity or smoothness in time.  This condition ensures that the random environment is in a measurably well-defined way, and enables us to prove the measurability of the set-valued map $F$ in the right-hand side of (\ref{ODE1}) at $t\in[0,T]$.
Assumptions \ref{A2}-\ref{A3} impose Lipschitz continuity conditions on the mappings $\Phi_1$ and $B_i$ in $x$, with integrable expected Lipschitz moduli, supporting the boundedness and linear-growth of the right-hand side of (\ref{ODE1}).
Finally, Assumption \ref{A4} guarantees that the solution set map ${\cal S}$ is u.s.c. and takes
nonempty and compact values.
\end{remark}

Before establishing the existence of weak solutions, we first present properties of the set-valued maps ${\cal S}$ and $F$.

\begin{lemma}\label{lemma1}
Under Assumptions \ref{A1}–\ref{A4}, the following statements hold.
\begin{description}
\item{(i)}  ${\cal S}(t,\xi,\mathbf{x})$ is
nonempty, convex and compact for any fixed $t\geq0$, $\xi \in \Xi$  and $\mathbf{x}\in\mathbb{R}^n$, and ${\cal S}$ is u.s.c. ;
\item{(ii)}
$F(t,\mathbf{x})$ is nonempty, convex and compact for any fixed $t\geq0$ and $\mathbf{x}\in\mathbb{R}^n$ and $F(t,\cdot)$ is u.s.c. for any $t\geq0$.
\end{description}
\end{lemma}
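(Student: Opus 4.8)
\quad
For part (i) I would argue pointwise first and then invoke Berge's maximum theorem. Fix $(t,\xi,\x)$. Each $\mathcal{Y}_i(t,\xi)$ is nonempty, closed and convex and is contained in the compact set $\overline{\mathcal{Y}}_i$, hence is itself compact; since $f_i(t,\xi,\x,\cdot)$ is continuous, the minimum is attained, so $\mathcal{S}_i(t,\xi,\x)\neq\emptyset$; it is a closed subset of $\overline{\mathcal{Y}}_i$, hence compact; and it is convex because $f_i$ is convex in $\y_i$ over the convex set $\mathcal{Y}_i(t,\xi)$. Upper semicontinuity is obtained by treating $(t,\xi,\x)$ as the parameter: $f_i$ is jointly continuous, and $(t,\xi)\mapsto\mathcal{Y}_i(t,\xi)$ is continuous with nonempty compact values by \ref{A4}, so Berge's maximum theorem yields that $\mathcal{S}_i$ is u.s.c.\ with nonempty compact values. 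Since a finite product of u.s.c.\ compact-valued maps is u.s.c., $\mathcal{S}=\mathcal{S}_1\times\cdots\times\mathcal{S}_k$ is u.s.c.\ with nonempty convex compact values.

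For part (ii) the starting point is the structural identity \eqref{eq:special_case_dsto}, which makes $\Phi(t,\xi,\x,\cdot)$ \emph{affine} in $y$; hence
\[
G(t,\xi,\x):=\Phi\bigl(t,\xi,\x,\mathcal{S}(t,\xi,\x)\bigr)=\Phi_1(t,\xi,\x)+\sum_{i=1}^k B_i(t,\xi,\x)\,\mathcal{S}_i(t,\xi,\x)
\]
is a translate of a finite sum of linear images of convex compact sets, so $G(t,\xi,\x)$ is convex and compact, and $F(t,\x)=\int_\Xi G(t,\xi,\x)\,dP_t(\xi)$ is precisely the Aumann integral of $\xi\mapsto G(t,\xi,\x)$. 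I would then verify that this multifunction is measurable in $\xi$ — $\mathcal{S}(t,\cdot,\x)$ is u.s.c.\ by (i) and uniformly bounded by $\overline{\mathcal{Y}}$, hence Borel measurable, $\Phi_1(t,\cdot,\x)$ and $B_i(t,\cdot,\x)$ are continuous, and sums and products of measurable multifunctions with measurable functions remain measurable — and that it is integrably bounded, since with $M_i:=\sup\{\|\y_i\|:\y_i\in\overline{\mathcal{Y}}_i\}$ one has $\|G(t,\xi,\x)\|\le\|\Phi_1(t,\xi,\x)\|+\sum_i M_i\|B_i(t,\xi,\x)\|$, whose $P_t$-expectation is finite by \ref{A2}, \ref{A3} and the standing assumption that the expectations are well defined.

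Granting these two facts, the pointwise properties of $F(t,\x)$ follow from the theory of Aumann integrals: nonemptiness from the Kuratowski--Ryll-Nardzewski measurable selection theorem together with the integrable bound (a measurable, integrable selection of $G$ exists and its expectation lies in $F(t,\x)$); convexity because $G(t,\xi,\x)$ is convex for each $\xi$, so a convex combination of integrable selections is again one; and compactness from the standard fact that the Aumann integral of a measurable, closed-valued, integrably bounded multifunction into $\bbr^n$ is compact (see, e.g., \cite{molchanov2005theory}).

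The remaining statement, that $F(t,\cdot)$ is u.s.c., is the crux. Since $F(t,\cdot)$ is compact-valued and locally bounded — $\|F(t,\x)\|\le\bbe_{P_t}[\|\Phi_1(t,\xi,\x)\|]+\sum_i M_i\,\bbe_{P_t}[\|B_i(t,\xi,\x)\|]$ grows at most affinely in $\|\x\|$ by \ref{A2} — it suffices to show that its graph is closed. I would take $\x_n\to\x$, $\v_n\in F(t,\x_n)$ with $\v_n\to\v$, write $\v_n=\int_\Xi g_n\,dP_t$ with measurable $g_n(\xi)\in G(t,\xi,\x_n)$, note that for $n$ large all $g_n$ are dominated by a single integrable function (so the family is uniformly integrable), extract via the Dunford--Pettis theorem a subsequence $g_{n_j}\rightharpoonup g$ weakly in $L^1$ so that $\v=\int_\Xi g\,dP_t$, and then apply Mazur's lemma (to pass to convex combinations converging a.e.) together with the u.s.c.\ of $G(t,\xi,\cdot)$ at $\x$ — inherited from the u.s.c.\ of $\mathcal{S}(t,\xi,\cdot)$ and the continuity of $\Phi_1,B_i$ — and the convexity and closedness of $G(t,\xi,\x)$ to conclude that $g(\xi)\in G(t,\xi,\x)$ for a.e.\ $\xi$, whence $\v\in F(t,\x)$. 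Equivalently one may invoke a set-valued Fatou/dominated-convergence lemma for Aumann integrals directly. Reconciling the pointwise upper semicontinuity of the integrand multifunction with the passage to the limit inside the integral under only an integrable domination is the main obstacle; part (i) and the pointwise parts of (ii) are comparatively routine.
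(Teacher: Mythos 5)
Your proposal is correct and follows essentially the same route as the paper: part (i) via a Berge-type maximum theorem (the paper cites Terazono--Matani for the convex-program version), and part (ii) by viewing $F(t,\x)$ as the Aumann integral of the composed multifunction $\mathcal{G}(t,\xi,\x)=\Phi(t,\xi,\x,\mathcal{S}(t,\xi,\x))$, establishing its measurability, convexity, compactness and integrable boundedness, and then deducing upper semicontinuity of the integral. The only real difference is that where the paper cites Yannelis's theorem on upper semicontinuity of the Aumann integral for the final step, you supply a direct proof of that fact via Dunford--Pettis and Mazur's lemma, which is a valid (and self-contained) substitute.
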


\begin{proof}
(i) For fixed $t \in \mathbb{R}_+$, $\xi \in \Xi$, and $\mathbf{x} \in \mathbb{R}^n$, since the feasible set $\mathcal{Y}_i(t,\xi)$ is closed and contained in a compact set $\overline{\mathcal{Y}}_i$, and the objective $f_i$ is continuous in $\y_i$, by the Weierstrass theorem, each set-valued map $\mathcal{S}_i$ is nonempty-valued and compact-valued, and so is the joint map $\mathcal{S}$. Furthermore, noting Assumption \ref{A4},  it then from \( f_i \) is continuous on \( \mathbb{R}_+ \times \Xi \times \mathbb{R}^n \times \mathbb{R}^{m_i} \), $f_i(t,\xi,\mathbf{x},\cdot)$ is convex on \( \mathbb{R}^{m_i} \) and \cite[Theorem 3.1]{terazono2015continuity} that the set-valued map \( \mathcal{S}_i \) is convex-valued and u.s.c. on \( \mathbb{R}_+ \times \Xi \times X \). Hence, it follows from \cite[Exercise 2.20]{rockafellar2009variational} that \( \mathcal{S} \) is convex-valued. Together with the compactness of \( \mathcal{S} \), we conclude from \cite[Proposition 4 pp.64 and Theorem 2 pp.62]{aubin2009differential} that \( \mathcal{S} \) is u.s.c.

(ii) For fixed \( (t, \mathbf{x}) \in \mathbb{R}_+ \times \mathbb{R}^n \) and a given realization \( \xi \in \Xi \), define the set-valued map
\[
\mathcal{G}(t,\xi,\mathbf{x}) := \big\{\, \Phi(t,\xi,\mathbf{x},\y) \;:\; \y \in \mathcal{S}(t,\xi,\mathbf{x}) \,\big\}.
\]
Since \( \mathcal{S} \) is u.s.c. with nonempty closed values, it is measurable by \cite[Proposition 8.2.1]{aubin1999set}. Moreover, since \( \Phi \) is continuous, it follows from \cite[Lemma 8.2.8]{aubin1999set} that \( \mathcal{G} \) is measurable. In addition, for each fixed \( (t,\xi,\mathbf{x}) \), the set \( \mathcal{S}(t,\xi,\mathbf{x}) \) is compact and \( \Phi(t,\xi,\mathbf{x},\cdot) \) is continuous, which implies that \( \mathcal{G}(t,\xi,\mathbf{x}) \) is compact. Thus, it follows from  $\mathcal{S}$ is u.s.c., $\Phi$ is continuous, and \cite[Proposition 1.4.14]{aubin1999set} that $\mathcal{G}$ is u.s.c.

We now study the properties of $F$. For fixed $(t,\mathbf{x})\in \mathbb{R}_+ \times \mathbb{R}^n$, $F(t,\mathbf{x})$ is defined as the Aumann integral of the random set $\mathcal{G}(t,\xi,\mathbf{x})$ with respect to the time varying probability measure $\mathbb P_t$
$$
\begin{aligned}
F(t,\mathbf{x})&=\int \mathcal{G}(t,\zeta,\mathbf{x})\, d{P}_t(\zeta)= \Big\{\,\mathbb{E}_{{P}_t}[\,g(\xi)\,] :   g(\xi)\in L^1(\mathcal{G}(t,\xi,\mathbf{x}))\Big\}.
\end{aligned}
$$
Let $M :=\max_{i\in[k]}\;\sup_{\mathbf{y} \in \overline{\mathcal{Y}}_{i}} \, \|\mathbf{y}\|$. By Assumptions \ref{A2} and \ref{A3} and recalling Definition \ref{def_integrably bounded}, for fixed $(t,\mathbf{x})$, it follows

\begin{equation}\label{upper_F_t_x}
\begin{aligned}
\mathbb{E}_{P_t}\!\Bigl[\sup_{\mathbf{z}\in\mathcal{G}(t,\xi,\x)}\|\mathbf{z}\|\Bigr]
&=\mathbb{E}_{P_t}\!\Bigl[
      \sup_{\mathbf{y}\in\mathcal{S}(t,\xi,\x)}
      \Bigl\|\Phi_1(t,\xi,\mathbf{x})+\sum_{i=1}^k B_i(t,\xi,\mathbf{x})\,\mathbf{y}\Bigr\|
    \Bigr] \\[4pt]
&\le \underbrace{\mathbb{E}_{P_t}[\|\Phi_1(t,\xi,x_0)\|]}_{\le \bar{\beta}}
     +\|\mathbf{x}-x_0\|\,\bar{\kappa}_{\Phi_1}(t) \\[4pt]
&\quad
     +M\sum_{i=1}^{k}
       \Bigl(
         \underbrace{\mathbb{E}_{P_t}[\|B_i(t,\xi,x_0)\|]}_{\le \bar{\beta}}
         +\|\mathbf{x}-x_0\|\,\bar{\kappa}_{B_i}(t)
       \Bigr) \\[6pt]
&\le (k+1)\bar{\beta}
     +\bigl(\bar{\kappa}_{\Phi_1}(t)+M\!\sum_{i=1}^k\bar{\kappa}_{B_i}(t)\bigr)\|\mathbf{x}-x_0\|< \infty,
\end{aligned}
\end{equation}
which implies $\mathcal{G}(t,\xi,\mathbf{x})$ is integrably bounded.  Hence, given its measurability as established, \cite[Theorem 8.1.3]{aubin1999set} ensures that $L^1(\mathcal{G}(t,\xi,\mathbf{x})) \neq \emptyset$ for fixed $(t,\mathbf{x})$, and thus $F(t,\mathbf{x})$ is nonempty. In addition, since $\Phi(t,\xi,\mathbf{x},\cdot)$ is affine and $\mathcal{S}$ is convex-valued, the mapping $\mathcal{G}$ is convex-valued. Thus, as $\xi \in \Xi \subset \mathbb{R}^\ell$ lies in a finite-dimensional space, it follows from \cite[Theorems 2.1.30 and 2.1.37]{molchanov2005theory} that the set-valued map $F$ takes convex and closed values. Hence, $F(t,\cdot)$ is compact-valued. Moreover, since for each fixed $(t,\mathbf{x})\in \mathbb{R}_+\times \mathbb{R}^n$, $\mathcal{G}(t,\cdot,\mathbf{x})$ is measurable, by \cite[Theorem 8.1.4]{aubin1999set}, $\mathcal{G}(t,\cdot,\mathbf{x}):\xi\rightrightarrows {\mathbb{R}^n}$ has a measurable graph. Therefore, noting that $\mathcal{G}$ is u.s.c., \cite[Theorem 3.1 and Remark 3.1]{yannelis1990upper} ensures that for fixed $t\in\mathbb{R}_+$, the map $F(t,\cdot)$ is u.s.c.
\end{proof}

\begin{theorem}[Existence of a weak solution]\label{thm:existence}
  Under Assumptions \ref{A1}–\ref{A4}, for any $T>0$,
  the DSVI-O (\ref{DSTO_first_stage})–(\ref{DSTO_second_stage}) has at least one weak solution $\left(x^*,y^*\right)$.
\end{theorem}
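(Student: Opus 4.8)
The plan is to recast \eqref{DSTO_first_stage} as the differential inclusion $\dot x(t)\in H(t,x(t))$ of \eqref{ODE1}, produce an absolutely continuous $x^{*}$ solving it on $[0,T]$ by a time-stepping/compactness argument in the spirit of Section~3, and then extract a measurable, integrable selection $y^{*}$ realising the limiting velocity. The structural input is Lemma~\ref{lemma1}: $F(t,\mathbf x)$ is nonempty, convex and compact and $F(t,\cdot)$ is u.s.c.; since $\Pi_X$ is single-valued and $1$-Lipschitz, $H(t,\mathbf x)=\Pi_X(\mathbf x-F(t,\mathbf x))-\mathbf x$ is nonempty and compact and $H(t,\cdot)$ is u.s.c. (though \emph{not} convex-valued in general, as $\varphi\mapsto\Pi_X(\mathbf x-\varphi)$ is not affine). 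For the time variable I would use Assumption~\ref{A1}: $\mathcal G(t,\xi,\mathbf x)$ is jointly measurable in $(t,\xi)$ and $F(t,\mathbf x)=\int\mathcal G(t,\cdot,\mathbf x)\,dP_t$ is its Aumann integral against the probability kernel, so $F(\cdot,\mathbf x)$, and hence $H(\cdot,\mathbf x)$, is $\mathcal B_{[0,T]}$-measurable for each fixed $\mathbf x$; thus $H$ is of Carath\'eodory type.

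Next I would establish an a priori bound confining every candidate solution to a fixed tube. Since $x_0\in X$, $\operatorname{dist}(\mathbf x-\varphi,X)\le\|\mathbf x-\varphi-x_0\|$, hence $\|\Pi_X(\mathbf x-\varphi)-\mathbf x\|\le\|\mathbf x-x_0\|+2\|\varphi\|$; together with the linear-growth estimate \eqref{upper_F_t_x} for $\sup_{\varphi\in F(t,\mathbf x)}\|\varphi\|$ this yields $\|v\|\le a(t)+b(t)\|\mathbf x-x_0\|$ for all $v\in H(t,\mathbf x)$, with $a,b\in L^{1}[0,T]$. A Gronwall argument then bounds $\|x(t)-x_0\|$ uniformly by a constant $R$ and the velocities by a fixed $\mu\in L^{1}[0,T]$ on $[0,T]$. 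I would then run the explicit Euler scheme on the inclusion: for $N\in\mathbb N$, $h=T/N$, $t_j=jh$, $x^N_0=x_0$; at each step pick $\varphi^N_j\in F(t_j,x^N_j)$ (nonempty by Lemma~\ref{lemma1}) together with a measurable $y^N_j(\cdot)\in\mathcal S(t_j,\cdot,x^N_j)$ with $\mathbb E_{P_{t_j}}[\Phi(t_j,\xi,x^N_j,y^N_j(\xi))]=\varphi^N_j$ (possible since $\varphi^N_j$ lies in the Aumann integral), set $x^N_{j+1}=x^N_j+h(\Pi_X(x^N_j-\varphi^N_j)-x^N_j)$, and interpolate. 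By the a priori bound $\{x^N\}$ is equibounded and equi-absolutely-continuous and $\{\dot x^N\}$ is uniformly integrable, so along a subsequence $x^N\to x^{*}$ in $C([0,T];\mathbb R^n)$, $\dot x^N\rightharpoonup\dot x^{*}$ weakly in $L^1$, and $x^{*}(t)=x_0+\int_0^t\dot x^{*}$.

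The last step — identifying the limiting velocity — is where I expect the real difficulty. The usual closure argument (Mazur's lemma plus u.s.c. of $H(t,\cdot)$ and $x^N(t)\to x^{*}(t)$) gives only $\dot x^{*}(t)\in\overline{\operatorname{conv}}\,H(t,x^{*}(t))$, which is strictly weaker than $\dot x^{*}(t)\in H(t,x^{*}(t))$ because $H$ is non-convex-valued, and would yield a relaxed rather than a weak solution in the stated sense. To recover a genuine weak solution — one whose velocity is an \emph{exact} projected field $\Pi_X(x^{*}(t)-\varphi^{*}(t))-x^{*}(t)$ with $\varphi^{*}(t)=\mathbb E_{P_t}[\Phi(t,\xi,x^{*}(t),y^{*}(t,\xi))]$ and $y^{*}_i(t,\xi)\in\mathcal S_i(t,\xi,x^{*}(t))$ — the route I would pursue is to approximate the inner problems by the Tikhonov-regularised ones $f_i+\tfrac{\varepsilon}{2}\|\cdot\|^2$, which are strongly convex, so the inner solution is single-valued and the regularised ODE has an exact Carath\'eodory solution $x^{\varepsilon}$; then let $\varepsilon\downarrow0$ and try to push the \emph{exact} identity $\dot x^{\varepsilon}(t)=\Pi_X(x^{\varepsilon}(t)-\varphi^{\varepsilon}(t))-x^{\varepsilon}(t)$ through the limit by dominated convergence. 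The crux is to control $\varphi^{\varepsilon}$ (pointwise in $t$, up to subsequences, using that $\Pi_X$ is single-valued and continuous and that $F(t,x^{*}(t))$ is convex) so that no convex hull intervenes; this is the step I expect to be the main obstacle. Finally, given $x^{*}$ and a measurable $\varphi^{*}$ with $\varphi^{*}(t)\in F(t,x^{*}(t))$ realising the velocity, the selection $y^{*}$ follows from two measurable-selection steps: $(t,\xi)\mapsto\mathcal S(t,\xi,x^{*}(t))$ is jointly measurable and integrably bounded by $M$, and, writing $\varphi^{*}(t)=\mathbb E_{P_t}[g(t,\cdot)]$ for a jointly measurable integrand selection $g$ of $\mathcal G$, the map $(t,\xi)\mapsto\{\mathbf y\in\mathcal S(t,\xi,x^{*}(t)):\Phi(t,\xi,x^{*}(t),\mathbf y)=g(t,\xi)\}$ has nonempty closed values, so Kuratowski--Ryll-Nardzewski gives a jointly measurable $y^{*}$, and integrability in $t$ is automatic from $\|y^{*}(t,\xi)\|\le M$.
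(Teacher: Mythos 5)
Your setup---recasting \eqref{DSTO_first_stage} as the inclusion $\dot x\in H(t,x)$, obtaining measurability of $H(\cdot,\mathbf x)$ through the Aumann integral of $\mathcal G$ against the kernel $P_t$, and deriving the linear-growth bound---matches the paper's. But the proposal has a genuine, self-acknowledged gap at the decisive step. After the Euler/compactness argument you obtain only $\dot x^{*}(t)\in\overline{\operatorname{conv}}\,H(t,x^{*}(t))$, which (as you note) is a \emph{relaxed} solution, not a weak solution in the sense of \eqref{solu_x}. The Tikhonov route you sketch does not close this: the regularised problems do have exact Carath\'eodory solutions $x^{\varepsilon}$, but when you pass $\varepsilon\downarrow 0$ the weak $L^{1}$ limit of $\dot x^{\varepsilon}$ again lands only in a closed convex hull of cluster values of the (set-valued, once $\varepsilon=0$) field---you face the identical convexification obstruction one level up. Since you explicitly flag this as ``the main obstacle'' and leave it open, the proposal does not yet prove the theorem.

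The paper avoids the Euler limit altogether. It verifies that $H$ is measurable in $t$, u.s.c.\ in $\mathbf x$, of linear growth, takes nonempty compact \emph{convex} values (asserting that convexity of $F(t,\mathbf x)$ is inherited by $H(t,\mathbf x)$), and satisfies the tangency condition $H(t,\mathbf x)\cap\mathcal T_X(\mathbf x)\neq\emptyset$ for $\mathbf x\in X$ (via the segment $(1-\lambda)\mathbf x+\lambda\Pi_X(\mathbf x-\varphi)\in X$), and then invokes \cite[Theorem 5.2]{deimling2011multivalued} to produce an absolutely continuous trajectory directly; the selection $y^{*}$ is then supplied by a measurable-selection theorem applied to $(t,\xi)\mapsto\mathcal S(t,\xi,x^{*}(t))$. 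Your observation that $\varphi\mapsto\Pi_X(\mathbf x-\varphi)$ is not affine, so that $H$ need not inherit convexity from $F$, is a legitimate concern about that step of the paper's argument---but identifying a difficulty is not the same as overcoming it. To make your route work you would need either to prove convexity of $H(t,\mathbf x)$ for the structure at hand, or to supply a relaxation/selection argument recovering an exact velocity; neither is done. Two smaller points: you never check a tangency condition, which the cited existence theorem requires on the domain $X$; on the other hand, your final step matching $y^{*}$ to the realised velocity via Kuratowski--Ryll-Nardzewski is more careful than the paper's (which selects $y^{*}\in\mathcal S(t,\xi,x^{*}(t))$ without tying it to $\dot x^{*}$), though it remains conditional on the unproven exact-velocity identity.
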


\begin{proof}
For a fixed $\mathbf{x}\in\mathbb{R}^n$, note that the set-valued map $\mathcal{G}(\cdot,\cdot,\mathbf{x})$ is measurable,
then, by Castaing’s theorem \cite[Theorem 14.5]{rockafellar2009variational}, there exists a sequence of measurable selections $g_{j}^\mathbf{x}:[0,\infty)\times\Xi\to\mathbb R^n$, $j\in\mathbb N$ such that
$\mathcal G(t,\xi,\mathbf{x})=\overline{\bigcup_{j\geq 1}g_j^\mathbf{x}(t,\xi)}$ for all $(t,\xi)\in \mathbb{R}_+\times \Xi$.
For each $j$, let
$
   \varphi_j^\mathbf{x}(t)\;:=\;\mathbb{E}_{P_t} [g_j^\mathbf{x}(t,\xi)], \ t\ge 0.
$
Noting Assumption~\ref{A1} and by \cite[Lemma 1.38]{kallenberg1997foundations}, it follows $t\mapsto \varphi_j^\mathbf{x}(t)$ is integrable. Now we show that for a fixed $\x\in\mathbb{R}^n$,
$
   F(t,\mathbf{x})\;=\;\overline{\bigcup_{j\geq 1}\varphi_j^\mathbf{x}(t)},\ \forall\,t\ge0.
$

On the one hand, since each \( \varphi_j^\mathbf{x}(t) \) is the expectation of a measurable selection of \( \mathcal{G} \), \( \varphi_j^\mathbf{x}(t) \in F(t,\mathbf{x}) \). Moreover, since \( F(t,\mathbf{x}) \) is a closed set, it contains the closure \( \overline{\bigcup_{j\geq 1}\varphi_j^\mathbf{x}(t)}\).

On the other hand, for any \( \bar{\varphi} \in F(t,\mathbf{x}) \), there exists an integrable selection \( \bar{g} \) such that \( \bar{\varphi} = \mathbb{E}_{P_t}[ \bar{g}]\). For each $\nu\in\mathbb N$, define
$j_\nu(\xi):=\min\bigl\{j\ge1:\,
            \|g_j^\mathbf{x}(t,\xi)-\bar{g}(\xi)\|<1/\nu\bigr\}.$
Set $\bar{g}_\nu(\xi):=g_{j_\nu(\xi)}^\mathbf{x}(t,\xi)$, then
$\|\bar{g}_\nu(\xi)-\bar{g}(\xi)\|\le 1/\nu$ for a.e. $\xi$ and, by \cite[Theorem 5]{aumann1965integrals},
$
      \mathbb{E}_{P_t} [\bar{g}_\nu]\xrightarrow[\nu\to\infty]{}\mathbb{E}_{P_t} [\bar{g}]=\bar{\varphi} .
$ Since each $\bar{g}_\nu \in \{g_j^\mathbf{x} : j \leq \nu\}$, we have $\mathbb{E}_{P_t} [\bar{g}_\nu]\in \{\varphi_j^\mathbf{x} : j \leq \nu\}$, and hence $\bar{\varphi} \in \overline{\{\varphi_j^\mathbf{x}(t) : j \in \mathbb{N}\}}$. Thus, for fixed $\mathbf{x} \in \mathbb{R}^n$,
$
F(t,\mathbf{x}) = \overline{\bigcup_{j \geq 1} \varphi_j^\mathbf{x}(t)},\ \forall\, t \ge 0.
$
It then follows from \cite[Theorem 8.1.4]{aubin1999set} that  $F(\cdot,\mathbf{x})$ is measurable for fixed $\mathbf{x} \in \mathbb{R}^n$.

Now we investigate the differential equation \eqref{DSTO_first_stage}.
Note that \eqref{DSTO_first_stage}
with $y_i(t,\xi) \in \mathcal{S}_i(t,\xi,x(t))$ for $i\in [k]$, can be rewritten using $F(t,x(t))$ as (\ref{ODE1}).

Recall the set-valued map
$
H(t,\mathbf{x}) := \{\Pi_X(\mathbf{x} - \varphi) - \mathbf{x} : \varphi \in F(t,\mathbf{x})\}.
$
We verify the required properties of this set-valued map. Since \( F(t,\mathbf{x}) \) is measurable in \( t \) for each fixed \( \mathbf{x} \) and the \( \Pi_X \) is Lipschitz continuous, it follows that \( H(\cdot, \mathbf{x}) \) is measurable. By \cite[Proposition 1.4.14]{aubin1999set}, the upper semicontinuity of \( H(t,\mathbf{x}) \) in \( \mathbf{x} \) for each fixed \( t \) follows from the upper semicontinuity of \( F(t,\mathbf{x}) \) and the continuity of the map \( (\mathbf{x},\varphi) \mapsto \Pi_X(\mathbf{x} - \varphi) - \mathbf{x} \). 

Moreover, since $F(t,\x)$ takes nonempty and compact values and, for each $(t,\x)$, the map
$\varphi\mapsto \Pi_X(\x-\varphi)-\x$ is continuous, it follows that $H(t,\x)$ is nonempty and compact.
To establish convexity we additionally assume {$X=\{\x:A\x=b\}$ or $n=1$}.
If $X=\{\x:A\x=b\}$ with $A\in\mathbb{R}^{m\times n}$ of full row rank, using the explicit formula 
$\Pi_X(z)=z-A^\top(AA^\top)^{-1}(Az-b)$ we get
$
\Pi_X(\x-\varphi)-\x
= -A^\top(AA^\top)^{-1}(A\x-b)\;-\;\big(I-A^\top(AA^\top)^{-1}A\big)\varphi,
$
which is affine in $\varphi$ and therefore maps the convex set $F(t,\x)$ to a convex set. 
If $n=1$, then both $X$ and $F(t,\x)$ are intervals, and since the one-dimensional projection $\Pi_X$ maps intervals to intervals, $H(t,\x)$ is an interval and therefore convex.

In addition, for any $T>0$, by the non-expansiveness of \( \Pi_X \) and noting (\ref{upper_F_t_x}) and $x_0\in X$, it follows
\begin{equation}\label{linear_growth}
\begin{aligned}
\sup_{\mathbf{v}\in H(t,\mathbf{x})}\|\mathbf{v}\|
&= \sup_{\varphi\in F(t,\mathbf{x})}\|\Pi_X(\mathbf{x}-\varphi)-\mathbf{x}\|\\
&=\sup_{\varphi\in F(t,\mathbf{x})}\|\Pi_X(\mathbf{x}-\varphi)-\Pi_X(\mathbf{x})+\Pi_X(\mathbf{x})-\mathbf{x}\|\\
&\le \sup_{\varphi\in F(t,\mathbf{x})}\|\varphi\| + \|\x - \Pi_X(\mathbf{x})\| \\
&\le (k+1)\bar{\beta}
     + \bigl(\bar{\kappa}_{\Phi_1}(t) + M\textstyle\sum_{i=1}^k \bar{\kappa}_{B_i}(t) + 1\bigr)\|\mathbf{x} - x_0\| \\
&\le \widetilde{L}_1(t)\|\mathbf{x}\| + \widetilde{L}_2(t),
\end{aligned}
\end{equation}
where
$
\widetilde{L}_1(t)
:= \bar{\kappa}_{\Phi_1}(t) + M\textstyle\sum_{i=1}^k \bar{\kappa}_{B_i}(t) + 1$ and $
\widetilde{L}_2(t)
:= (k+1)\bar{\beta} + \widetilde{L}_1(t)\|x_0\|.
$
Since $\bar{\kappa}_{\Phi_1}$ and $\bar{\kappa}_{B_i}$ are integrable over the interval [0,T], so are
$\widetilde{L}_1$ and $\widetilde{L}_2$. Thus \(H(t,\mathbf{x})\) satisfies the linear-growth condition.
In addition, for fixed
$(t,\mathbf{x})\in[0,T]\times X$, pick $\varphi\in F(t,\mathbf{x})$ and set
$
\v=\Pi_X\bigl(\mathbf{x}-\varphi\bigr)-\mathbf{x}\in H(t,\mathbf{x}).
$ Since $\Pi_X(\mathbf{x}-\varphi)\in X$ and $X$ is convex,
the segment $\mathbf{x}+\lambda \v=(1-\lambda)\mathbf{x}+\lambda \Pi_X(\mathbf{x}-\varphi)$ lies in $X$
for every $\lambda\in(0,1]$.  Hence, $\frac{\x+\lambda \v -\x}{\lambda}=\v$, which means $\v$ is in the tangent cone to $X$ at $\x$, and thus
$
\v\;\in\;H(t,\mathbf{x})\cap \mathcal{T}_X(\mathbf{x})\neq\varnothing
$ for any $(t,\mathbf{x})\in[0,T]\times X$.

Therefore, according to \cite[Theorem 5.2]{deimling2011multivalued}, for any given initial state $x(0) = x_0 \in X$, there exists at least one absolutely continuous trajectory $x^*(t)$ on $[0,T]$ such that $x^*(0)=x_0$.

Finally, we give the measurable selection $y^*$ for the optimization problems in (\ref{DSTO_second_stage}).
Define
$
\mathcal{H}(t,\xi):=\mathcal{S}\bigl(t,\xi,x^{*}(t)\bigr)$ for $(t,\xi)\in[0,T]\times\Xi .
$
Since $\mathcal{S}$ is measurable, and the trajectory \(x^{*}(\cdot)\) is continuous,  the graph
$
\operatorname{Graph}\mathcal{H}=\bigl\{(t,\xi,y):y\in\mathcal{S}(t,\xi,x^{*}(t))\bigr\}
$
belongs to \(\mathcal{B}_{[0,T]}\otimes\mathcal{B}_{\Xi}\otimes\mathcal{B}_{\mathbb{R}^n}\).
By \cite[Theorem 8.1.3]{aubin1999set}, there exists a jointly measurable selector
$
(t,\xi)\longmapsto y^{*}(t,\xi)\in\mathcal{S}(t,\xi,x^{*}(t)).
$
Moreover, the feasible sets are uniformly bounded, i.e.,\ \(\|y^{*}(t,\xi)\|\le M\), thus
$
\int_{0}^{T}\mathbb{E}_{P_t}[\|y^{*}(t,\xi)\|]\,dt\le MT<\infty.
$
Hence a jointly measurable and integrable selection \(y^{*}\) is obtained.
Together with the state trajectory $x^{*}$, they satisfy the integral equation
$$
x^{*}(t) = x_{0} + \int_{0}^{t} \left(
\Pi_{X} \!\left( x^{*}(\tau) - \mathbb{E}_{P_{\tau}} \!\left[ \Phi\!\left( \tau, x^{*}(\tau), y^{*}(\tau, \xi) \right) \right] \right)
- x^{*}(\tau) \right) \, d\tau.
$$
We complete the proof.
\end{proof}

Now we consider a special case of the DSVI-O \eqref{DSTO_first_stage}-\eqref{DSTO_second_stage} where the objective function $f_i$ is uniformly strongly convex in $\y_i$ for $i\in [k]$ in the sense that there is $\eta_i>0$ such that for any $\lambda\in (0,1)$,
\begin{equation}\label{strong}
f_i(t,\xi,\x,\lambda\y+(1-\lambda)\z)
\le \lambda f_i(t,\xi,\x,\y)+(1-\lambda)f_i(t,\xi,\x,\z)-\eta_i\lambda(1-\lambda)\|\y-\z\|^2
\end{equation}
holds for any fixed $t\in \bbr_+, \x\in \bbr^n$ and a.e. $\xi \in \Xi$.

\begin{corollary}[Existence of a classical solution]\label{coro:existence}
 Suppose that the functions $f_i$ for $i\in [k]$ satisfy (\ref{strong}) and $P_t$ is a Feller kernel \cite{beneduci2016positive}, i.e., the function $t\mapsto \bbe_{P_t}[\psi(\xi)]$ is continuous and bounded in $[0,T]$  whenever $\psi\in C_b(\Xi)$. Then under Assumptions \ref{A1}–\ref{A4},
  the DSVI-O (\ref{DSTO_first_stage})–(\ref{DSTO_second_stage}) has at least one classical solution $\left(x^*,y^*\right)$ for any $T>0$ in the sense that $(x^*,y^*)$ is a weak solution of (\ref{DSTO_first_stage})–(\ref{DSTO_second_stage}) with $x^*$ being continuously differentiable and $y^*(\cdot, \xi)$ being continuous for a.e. $\xi \in \Xi$.
\end{corollary}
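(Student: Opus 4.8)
The plan is to use the uniform strong convexity to reduce the set-valued second stage to a single-valued continuous map, thereby upgrading the differential inclusion (\ref{ODE1}) to an ordinary differential equation with continuous right-hand side, and then to replace the use of Deimling's selection theorem in the proof of Theorem~\ref{thm:existence} by Peano's existence theorem. First, for each $i\in[k]$ the strong convexity inequality (\ref{strong}) makes the minimizer of $f_i(t,\xi,\mathbf{x},\cdot)$ over the convex set $\mathcal{Y}_i(t,\xi)$ unique, so $\mathcal{S}_i(t,\xi,\mathbf{x})=\{y_i(t,\xi,\mathbf{x})\}$ is a singleton; by Lemma~\ref{lemma1}(i) this map is u.s.c.\ with nonempty compact values, and a single-valued u.s.c.\ map is continuous in the usual sense, so $(t,\xi,\mathbf{x})\mapsto y_i(t,\xi,\mathbf{x})$, and hence $y(t,\xi,\mathbf{x})=(y_1(t,\xi,\mathbf{x}),\dots,y_k(t,\xi,\mathbf{x}))$, is continuous on $\mathbb{R}_+\times\Xi\times\mathbb{R}^n$. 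Then the map $\mathcal{G}$ from the proof of Lemma~\ref{lemma1} collapses to the single-valued continuous function $G(t,\xi,\mathbf{x}):=\Phi_1(t,\xi,\mathbf{x})+\sum_{i=1}^k B_i(t,\xi,\mathbf{x})\,y_i(t,\xi,\mathbf{x})$, and $F(t,\mathbf{x})=\{\mathbb{E}_{P_t}[G(t,\xi,\mathbf{x})]\}$ is single-valued.

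Next, I would establish the joint continuity of $F$ on $[0,T]\times\mathbb{R}^n$. Taking $(t_n,\mathbf{x}_n)\to(\bar t,\bar{\mathbf{x}})$, write
\[
F(t_n,\mathbf{x}_n)-F(\bar t,\bar{\mathbf{x}})=\mathbb{E}_{P_{t_n}}\!\big[G(t_n,\xi,\mathbf{x}_n)-G(\bar t,\xi,\bar{\mathbf{x}})\big]+\big(\mathbb{E}_{P_{t_n}}[\psi]-\mathbb{E}_{P_{\bar t}}[\psi]\big),\qquad \psi(\xi):=G(\bar t,\xi,\bar{\mathbf{x}}).
\]
The second term vanishes by the Feller property, applied componentwise to $\psi$ (after cutting $\psi$ off at a level $R$ by a continuous radial truncation and sending $R\to\infty$, the tails being controlled by the expected-envelope bound used in (\ref{upper_F_t_x})). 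For the first term the integrand tends to $0$ pointwise in $\xi$ by continuity of $G$, and is dominated uniformly in $n$ by an integrable envelope assembled from $\kappa_{\Phi_1},\kappa_{B_i}$, the uniform bound $M$ on $\overline{\mathcal{Y}}_i$, and the values of $\Phi_1,B_i$ at a reference point, so it tends to $0$ by a dominated-convergence argument along the family $\{P_{t_n}\}$. Hence $F$, and therefore $H(t,\mathbf{x})=\Pi_X(\mathbf{x}-F(t,\mathbf{x}))-\mathbf{x}$, is a single-valued continuous function on $[0,T]\times\mathbb{R}^n$.

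Finally, with $H$ continuous and single-valued and the linear-growth bound (\ref{linear_growth}) (whose coefficients $\widetilde{L}_1,\widetilde{L}_2$ are now continuous, hence bounded on $[0,T]$), Peano's existence theorem gives an absolutely continuous, in fact $C^1$, solution $x^*$ of $\dot{x}(t)=H(t,x(t))$ with $x^*(0)=x_0$ on all of $[0,T]$, the linear growth ruling out finite-time blow-up; the tangency/viability argument already carried out in the proof of Theorem~\ref{thm:existence} gives $x^*(t)\in X$. Setting $y^*(t,\xi):=y(t,\xi,x^*(t))$ produces a map continuous in $(t,\xi)$ (a composition of continuous maps), hence measurable in $\xi$, with $\|y^*(t,\xi)\|\le M$ so that $\int_0^T\mathbb{E}_{P_t}[\|y^*(t,\xi)\|]\,dt\le MT<\infty$, and $y_i^*(t,\xi)\in\mathcal{S}_i(t,\xi,x^*(t))$ by construction; thus $(x^*,y^*)$ is a weak solution with $x^*\in C^1[0,T]$ and $y^*(\cdot,\xi)$ continuous for every $\xi$, i.e.\ a classical solution.

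The main obstacle is the joint continuity of $F$: the Feller hypothesis is tailor-made for the pure measure-variation term but only for \emph{bounded} continuous test functions, whereas $G(\bar t,\cdot,\bar{\mathbf{x}})$ is merely continuous in $\xi$, so one must interleave Feller convergence with a truncation and a uniform-integrability estimate for the tails drawn from \ref{A2}--\ref{A3}; simultaneously the integrand-variation term calls for a dominated-convergence argument along the moving family $\{P_{t_n}\}$ rather than under a single fixed measure. Everything else is a routine strengthening of the arguments already used for Theorem~\ref{thm:existence}.
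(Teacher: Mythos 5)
Your proposal is correct and follows essentially the same route as the paper: strong convexity makes the second-stage solution map single-valued, hence (being u.s.c.\ with compact values) continuous; the continuity of $F$ is then obtained by the same two-term split, with the Feller property plus a truncation/uniform-integrability argument handling the measure-variation term and a dominated-convergence argument handling the integrand-variation term; and Peano's theorem with the linear-growth bound yields the $C^1$ trajectory, after which $y^*(t,\xi):=\hat y(t,\xi,x^*(t))$ gives the continuous second-stage selection. Your use of a continuous radial truncation is in fact slightly cleaner than the paper's indicator-function cutoff, but the argument is otherwise the same.
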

\begin{proof}
For each $i\in[k]$, since $f_i$, $i\in[k]$ is strongly convex~\eqref{strong}, there exists a unique solution $\hat y_i(t,\xi,\x)$ of \eqref{DSTO_second_stage} for any $(t,\xi,\x)\in\bbr_+\times \Xi\times X$.
Moreover, by following the proof of Lemma~\ref{lemma1}, we obtain that the joint solution map \(\mathcal{S}\) is u.s.c. Since it is single-valued, we conclude that $\hat y(t, \xi, \mathbf{x}):=(\hat y_1(t,\xi,\x),\ldots,\hat y_k(t,\xi,\x))$ is continuous on \(\bbr_+ \times \Xi \times X\).
From Lemma~\ref{lemma1}, the function \( F(t,\x) = \mathbb{E}_{P_t}\left[\Phi(t,\xi,\x,\hat y(t, \xi, \x))\right] \) is well-defined.
Now we prove the continuity of \(F\). Let \((t_\nu, \x_\nu) \to (t,\x)\) be any convergent sequence in \([0,T] \times X\). For each \(\nu \in \mathbb{N}\), define
$
\psi_\nu(\xi) := \Phi(t_\nu,\xi,\x_\nu,\hat y(t_\nu, \xi,\x_\nu))$ and $
\psi(\xi) := \Phi(t,\xi,\x,\hat y(t,\xi,\x)).
$
Then
\begin{eqnarray*}
\|F(t_\nu,\x_\nu) - F(t,\x)\|& =& \left\|\mathbb{E}_{P_{t_\nu}}[\psi_\nu] - \mathbb{E}_{P_t}[\psi] \right\|\\
&\le& \underbrace{\mathbb{E}_{P_{t_\nu}}[\|\psi_\nu - \psi\|]}_{I_{1\nu}} + \underbrace{\left\|\mathbb{E}_{P_{t_\nu}}[\psi] - \mathbb{E}_{P_t}[\psi]\right\|}_{I_{2\nu}}.
\end{eqnarray*}
By the continuity of \(\Phi\) and \(\hat y\) in \((t,\x)\), (\ref{upper_F_t_x}) and the dominated convergence theorem, we have
$I_{1\nu} = \mathbb{E}_{P_{t_\nu}}[\|\psi_\nu - \psi\|] \to 0$ as $\nu \to \infty.$  To treat the second term, let \(\bar{M} > 0\) and define the truncated function
$
\psi^{(\bar{M})}(\xi) := \psi(\xi)\cdot \mathbf{1}_{\{\|\psi(\xi)\| \le \bar{M}\}} \in C_b(\Xi).
$
Then it follows
$
I_{2\nu}
\le
\left\| \mathbb{E}_{P_{t_\nu}}[\psi - \psi^{(\bar{M})}] \right\|
+ \left\| \mathbb{E}_{P_{t_\nu}}[\psi^{(\bar{M})}] - \mathbb{E}_{P_t}[\psi^{(\bar{M})}] \right\|
+ \left\| \mathbb{E}_{P_t}[\psi^{(\bar{M})} - \psi] \right\|.
$
The first and third terms can be made arbitrarily small uniformly in \(\nu\) by the uniform integrability of \(\{\psi_\nu\}\) and \(\psi\) from (\ref{upper_F_t_x}).
The middle term tends to zero as \(\nu \to \infty\), since \(P_t\) is a Feller kernel and \(\psi^{(\bar{M})} \in C_b(\Xi)\).
Hence, we conclude that
$
I_{2\nu} \to 0$ as $\nu \to \infty,
$
which implies that $F(t_\nu,\x_\nu)\to F(t,\x)$ as $\nu \to \infty$. Thus $F$ is continuous in $[0,T]\times \bbr^n$.

Moreover, since $\Pi_X(\cdot)$ is Lipschitz continuous with Lipschitz constant one, the right-hand side of the ODE
\begin{equation}\label{ODE_H}
\dot{x}(t)=\Pi_{X}(x(t)-F(t,x(t)))-x(t)
\end{equation}
is continuous in $[0,T]\times \bbr^n$. Thus, the linear growth condition (\ref{linear_growth}) and the classical Peano and extension theorem (e.g.\ \cite[Theorems 1.1 and 2.1]{hale2009ordinary}) imply that the ODE \eqref{ODE_H} has at least a solution
$x^*\in C^{1}([0,T])$.
Set
$
      y^*(t,\xi):=\hat y(t,\xi,x^*(t))$ for any $t\in[0,T]$ and $\xi\in\Xi.$
Since \( \hat{y} \) is continuous and \( x^* \in C^1([0,T]) \), it follows that the mapping \( t \mapsto y^*(t,\xi) \) is continuous for a.e. \( \xi \in \Xi \).
Moreover, as \( \hat{y}(t, \xi, \x) \) is bounded, we have \( \mathbb{E}_{P_t}[\|y^*(t,\xi)\|] < \infty \) for all \( t \in [0,T] \).
Therefore, the pair \( (x^*, y^*) \) forms a classical solution on \( [0,T] \), which completes the proof.
\end{proof}

\begin{remark}\label{remark_unique}
Corollary~\ref{coro:existence} establishes the existence of a classical solution to the DSVI-O \eqref{DSTO_first_stage}–\eqref{DSTO_second_stage}. However, uniqueness is not guaranteed in general, as it typically requires Lipschitz continuity of the right-hand side of the ODE in \eqref{ODE_H}.
Below, we provide a special example in which uniqueness holds. Suppose that for each \( i \in [k] \), the objective function is given by
\begin{equation}\label{strongC}
f_i(t, \xi, \x, \y_i) = g_i(t, \xi, \x, \y_i) + h_i(\y_i),
\end{equation}
where \( g_i \colon [0,T] \times \Xi \times X \times \mathbb{R}^{m_i} \to \mathbb{R} \) is twice continuously differentiable with respect to $\y$, and \( h_i \colon \mathbb{R}^{m_i} \to \mathbb{R} \cup \{+\infty\} \) is proper, l.s.c., and convex (possibly nonsmooth). The feasible set is unconstrained, i.e., \( \mathcal{Y}_i(t,\xi) = \mathbb{R}^{m_i} \) for all \( (t,\xi) \in [0,T] \times \Xi \) and \( i \in [k] \). Moreover, assume that $g_i(t,\xi,\x,\cdot)$ is strongly convex for every $(t,\xi,\x)$, i.e., the Hessian $\nabla^2_{\y_i\y_i} g_i(t,\xi,\x,\y_i)$ is positive definite.

Under these conditions, the unique solution $\hat{y}_i(t,\xi,\x)$ of \eqref{DSTO_second_stage} exists, and the solution mapping $p:=(t,\xi,\x)\mapsto \hat{y}_i(p)$ is Lipschitz continuous by \cite[Corollary 2B.10]{dontchev2009implicit}.
Indeed, for any fixed  $\bar{\y}_i \in \mathcal{S}_i(\bar{p})$, define the set-valued mapping
\[
\Psi(\z) := \nabla_{\y_i} g_i(\bar{p},\bar{\y}) + \nabla^2_{\y_i\y_i} g_i(\bar{p},\bar{\y})(\z - \bar{\y}) + \partial h_i(\z),
\]
which is the subgradient of the strongly convex function
$$
\psi(\z) := g_i(\bar{p},\bar{\y}) + (\z - \bar{\y})^\top\nabla_{\y_i} g_i(\bar{p},\bar{\y})
+ \frac{1}{2}(\z - \bar{\y})^\top \nabla^2_{\y_i\y_i} g_i(\bar{p},\bar{\y})(\z - \bar{\y}) + h_i(\z).
$$
It is easy to see that $0\in \partial \psi(\bar{\y}_i)=\Psi(\bar{\y}_i).$ The strong convexity of $\psi$ implies that its Fenchel conjugate $\psi^*$ is differentiable with Lipschitz continuous gradient (cf.\ \cite[Proposition 12.60]{rockafellar2009variational}). In particular, $\Psi^{-1} = \nabla \psi^*$ is single-valued and Lipschitz continuous around $0$.
Therefore, the solution map $\mathcal{S}_i$ is Lipschitz continuous by \cite[Corollary 2B.10]{dontchev2009implicit}.

Recall that
$
H(t,\x) = \Pi_X\!\left(\x - \mathbb{E}_{P_t}\big[\Phi\big(t,\xi,\x,\hat{y}(t,\xi,\x)\big)\big]\right) - \x.
$
Since $\Phi(t,\xi,\x,\y)$ is Lipschitz continuous in $\x$ and $\y$, the projection $\Pi_X$ is Lipschitz continuous, and the solution map $\hat{y}(t,\xi,\x)=\mathcal{S}_i(t,\xi,\x)$ is Lipschitz continuous in $\x$, $H$ is Lipschitz continuous in $\x$. Moreover, if $P_t$ is a Feller kernel, $H$ is also continuous in $t$. In particular, the Lipschitz property further implies a linear growth property. Consequently, by the Picard--Lindelöf theorem~\cite[Theorem~3.1]{hale2009ordinary}, the associated differential equation admits a unique $C^1$ solution $x^*(\cdot)$ on the entire interval $[0,T]$. This result partially extends~\cite[Lemma~2.1]{chen2022dynamic} to the unconstrained setting.

\end{remark}

We next establish the feasibility of the solution to the system \eqref{DSTO_first_stage}--\eqref{DSTO_second_stage}. Specifically, we show that if the initial condition belongs to the closed convex set \( X \), then the entire trajectory remains within \( X \) over a finite time horizon.

\begin{lemma}\label{lem:projection_invariance_DI}For any $T>0$,
 if $x_0\in X$, then any weak solution \((x^*,y^*)\) of \eqref{DSTO_first_stage}-\eqref{DSTO_second_stage} satisfies  \(x^*(t) \in X\) for all \(t \in[0,T]\).
\end{lemma}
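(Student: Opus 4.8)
The plan is to exploit the fact that the weak solution $x^*$ is absolutely continuous and satisfies the integral identity \eqref{solu_x}, and to show that the vector field driving the trajectory always points into $X$ when the state is on the boundary, so that $X$ is forward invariant. Concretely, I would argue by contradiction using the distance function $d(t) := \operatorname{dist}(x^*(t), X) = \|x^*(t) - \Pi_X(x^*(t))\|$. Since $x_0 \in X$ we have $d(0) = 0$, and the goal is to show $d(t) = 0$ for all $t \in [0,T]$. The key observation is that for any $\varphi \in F(t,x^*(t))$, writing $v = \Pi_X(x^*(t) - \varphi) - x^*(t)$, one has $\Pi_X(x^*(t)-\varphi) \in X$, so the inner product of $x^*(t) - \Pi_X(x^*(t))$ with $v$ satisfies
\[
\bigl(x^*(t) - \Pi_X(x^*(t))\bigr)^\top v = \bigl(x^*(t) - \Pi_X(x^*(t))\bigr)^\top\bigl(\Pi_X(x^*(t)-\varphi) - x^*(t)\bigr) \le -\,d(t)^2,
\]
because $\Pi_X(x^*(t)-\varphi) - \Pi_X(x^*(t))$ makes a nonpositive inner product with $x^*(t) - \Pi_X(x^*(t))$ by the obtuse-angle (variational) characterization of the Euclidean projection, namely $(x^*(t) - \Pi_X(x^*(t)))^\top(z - \Pi_X(x^*(t))) \le 0$ for all $z \in X$.

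Next I would differentiate $\tfrac12 d(t)^2$. Since $\Pi_X$ is $1$-Lipschitz and $x^*$ is absolutely continuous, $t \mapsto \tfrac12 d(t)^2$ is absolutely continuous, and for a.e.\ $t$ its derivative equals $(x^*(t) - \Pi_X(x^*(t)))^\top \dot{x}^*(t)$; here one uses that the term coming from the derivative of $\Pi_X(x^*(t))$ drops out, again by the projection inequality (this is the standard computation that $\nabla_x \bigl(\tfrac12\operatorname{dist}(x,X)^2\bigr) = x - \Pi_X(x)$). Because $\dot{x}^*(t) = \Pi_X(x^*(t) - \varphi(t)) - x^*(t)$ for some measurable selection $\varphi(t) \in F(t,x^*(t))$ (this follows from \eqref{solu_x} together with the structure of $H$ in \eqref{ODE1}), the estimate above gives
\[
\frac{d}{dt}\Bigl(\tfrac12 d(t)^2\Bigr) \le -\,d(t)^2 \quad \text{for a.e. } t \in [0,T].
\]
Integrating and applying Gr\"onwall's inequality with $d(0)^2 = 0$ yields $d(t)^2 \le 0$, hence $d(t) = 0$, i.e.\ $x^*(t) = \Pi_X(x^*(t)) \in X$ for all $t \in [0,T]$.

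The main obstacle, and the point requiring care, is the justification that $\tfrac12 d(t)^2$ is differentiable a.e.\ with the claimed derivative. The function $x \mapsto \operatorname{dist}(x,X)$ is only $1$-Lipschitz (not $C^1$) on $\mathbb{R}^n$, but $x \mapsto \tfrac12\operatorname{dist}(x,X)^2$ is continuously differentiable with gradient $x - \Pi_X(x)$ (a classical fact for projections onto closed convex sets), so the composition with the absolutely continuous map $x^*(\cdot)$ is absolutely continuous and the chain rule applies for a.e.\ $t$. An alternative, fully self-contained route that avoids even this subtlety is to bound $\tfrac12\bigl(d(t+h)^2 - d(t)^2\bigr)$ directly: using $d(t+h)^2 \le \|x^*(t+h) - \Pi_X(x^*(t))\|^2$ and expanding $x^*(t+h) = x^*(t) + \int_t^{t+h}\dot{x}^*(\tau)\,d\tau$, one obtains the differential inequality $\dot{d^2}(t) \le -2d(t)^2 + o(1)$-type estimates at Lebesgue points of $\dot{x}^*$ and concludes identically by Gr\"onwall. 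Either way the argument is short once the projection inequality $(x - \Pi_X(x))^\top(z - \Pi_X(x)) \le 0$ for $z \in X$ is invoked; I would present the $C^1$ squared-distance version as the cleanest.
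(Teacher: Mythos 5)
Your proposal is correct and follows essentially the same route as the paper: both use the Lyapunov function $\tfrac12\operatorname{dist}^2(x,X)$, its $C^1$ property with gradient $x-\Pi_X(x)$, and the obtuse-angle projection inequality to get the decrease estimate $\le -\operatorname{dist}^2(x^*(t),X)$, concluding from the zero initial value. The only cosmetic differences are that the paper invokes a set-valued chain rule and concludes by monotonicity of $V\ge 0$ rather than Gr\"onwall, neither of which changes the substance.
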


\begin{proof}
Define the Lyapunov map
$
V(x)=\tfrac12\operatorname{dist}^2(x,X)
        =\tfrac12\|x-\Pi_X(x)\|^{2}.
$
Since $X$ is nonempty, convex and closed, $x\in X$ iff $V(x)=0$ and $V$ is continuously differentiable with $\nabla V(x)=x-\Pi_X(x)$ \cite[Theorem 3.2]{fukushima1992equivalent}.
Since   \((x^*,y^*)\) is a weak solution of \eqref{DSTO_first_stage}-\eqref{DSTO_second_stage}, $x^*(t)$ satisfies
\[
\dot{x}^*(t)\in H(t,x^*(t))
     :=\bigl\{\Pi_X\!\bigl(x^*(t)-\varphi\bigr)-x^*(t):\,\varphi\in F(t,x^*(t))\bigr\},\ a.e.\ t\in[0,T],
\]
where $F(t,x^*(t))$ is defined in (\ref{F_t_x}).
By the chain rule \cite[Theorem 2.2]{shevitz1994lyapunov},
for almost every \(t\in[0,T]\)
\begin{equation}\label{dotV}
\frac{d}{dt}V\bigl(x^*(t)\bigr)\;\subset\;
\Bigl\{\nabla V\bigl(x^*(t)\bigr)^{\!\top}v:\;
      v\in H\bigl(t,x^*(t)\bigr)\Bigr\}.
\end{equation}
Now fix such a time \(t\) and pick any
\(v\in H\bigl(t,x^*(t)\bigr)\).
Then there exists \(\varphi\in F\bigl(t,x^*(t)\bigr)\) with
\(v=\Pi_X\!\bigl(x^*(t)-\varphi\bigr)-x^*(t)\).
Since $X$ is convex, by the first-order optimality condition of $\min_{\mathbf{x}\in X}\tfrac{1}{2}\|x^*(t)-x\|^2 $, we have
$\bigl\langle x^*(t)-\Pi_X(x^*(t)),\,\Pi_X(x^*(t)-\varphi)-\Pi_X(x^*(t))\bigr\rangle\le0$. Consequently,
\[
\begin{aligned}
&\nabla V\bigl(x^*(t)\bigr)^{\!\top}v
=\bigl\langle x^*(t)-\Pi_X(x^*(t)),\,\Pi_X(x^*(t)-\varphi)-x^*(t)\bigr\rangle \\
&=\bigl\langle x^*(t)-\Pi_X(x^*(t)),\,\Pi_X(x^*(t)-\varphi)-\Pi_X(x^*(t))\bigr\rangle
   -\|x^*(t)-\Pi_X(x^*(t))\|^{2}\\
&\le -\|x^*(t)-\Pi_X(x^*(t))\|^{2}\;\le\;0.
\end{aligned}
\]
Because the right-hand side of (\ref{dotV}) consists entirely of nonpositive values,
we conclude that \(\tfrac{d}{dt}V(x^*(t))\subset (-\infty,0]\) for almost every \(t\in[0,T]\). Since \(V(x_0)=0\) (as \(x_0\in X\)) and \(V\) is non-increasing along the trajectory \(x^*\), it follows that \(V(x^*(t)) \equiv 0\) for all \(t\in[0,T]\). Hence, \(x^*(t)\in X\) for all \(t\in[0,T]\).
\end{proof}

\section{Discrete approximation and convergence analysis}\label{discrete}

In this section, we propose a discrete approximation scheme to compute numerical solutions of   DSVI-O~\eqref{DSTO_first_stage}--\eqref{DSTO_second_stage}. We first apply a time-stepping method based on the forward Euler scheme over a uniform time grid to the ODE in \eqref{DSTO_first_stage}. Next we apply the SAA to compute approximate values of the expectation at each discrete time. Finally, we define piecewise linear continuous-time functions by linear interpolation  using these discrete points and prove the convergence to the $x$ part of a weak solution of  DSVI-O~\eqref{DSTO_first_stage}--\eqref{DSTO_second_stage} as the step size in the time-stepping method goes to 0 and the sample size of the SAA goes to infinity.

\subsection{Discrete approximation}\label{sec:time_stepping}

We describe the time-stepping method for the ODE in \eqref{DSTO_first_stage}. For a fixed $T>0$, we define the time step size $h := T/N$ with a fixed $N \in \mathbb{N}$. Consider the uniform partition of the interval $[0,T]$ with nodes $t_\nu := \nu h$ for $\nu = 0,\dots,N$. Given the initial condition $\x_0^N = x(0)=x_0  \in X$, we recursively construct a discrete trajectory $\{\mathbf{x}_\nu^N\}_{\nu=0}^{N} \subset \mathbb{R}^n$ along with measurable selections $\{y_{\nu,i}^N(\xi)\}_{\nu=0}^{N} \subset \mathbb{R}^{m_i}$ for $i\in[k]$ as follows.

{\bf Forward Euler scheme}

\begin{equation} \label{eq:be_scheme}
\left\{\begin{aligned}
\mathbf{x}_{\nu+1}^{N} &= \mathbf{x}_{\nu}^{N}
+ h\left(
\Pi_{X} \left(
\mathbf{x}_{\nu}^{N}
- \mathbb{E}_{P_{t_\nu}} \left[
\Phi(t_\nu, \xi, \mathbf{x}_\nu^N, y_\nu^N(\xi))
\right]
\right)
- \mathbf{x}_{\nu}^{N} \right), \\[4pt]
y_{\nu,i}^N(\xi) &\in \arg\min_{\mathbf{y}_i \in \mathcal{Y}_i(t_\nu, \xi)}
f_i(t_\nu, \xi, \mathbf{x}_\nu^N, \mathbf{y}_i), \quad i \in [k],
\end{aligned}\right.
\end{equation}
where
$
y_\nu^N(\xi) := \left(y_{\nu,1}^N(\xi), \dots, y_{\nu,k}^N(\xi)\right).
$

The forward Euler scheme \eqref{eq:be_scheme} involves an intractable expectation with respect to the distribution \( P_{t_\nu} \) at each time \( t_\nu \). To approximate this expectation, we apply the SAA method.

At each node \( t_\nu \), we draw a batch of i.i.d.\ samples \( \{\xi_\nu^{j}\}_{j=1}^{J} \) from \( P_{t_\nu} \), where the batch size \( J \in \mathbb{N} \) is independent of \( \nu \). Replacing the expectation by the empirical mean leads to the following SAA--Euler scheme:

{\bf SAA--Euler scheme}
\begin{equation} \label{eq:be_saa_compact}
\left\{\begin{aligned}
\x_{\nu+1}^{N,J}
&=
\x_{\nu}^{N,J}
+\,h\Big(
        \Pi_{X}\big(
            \x_{\nu}^{N,J}
            -\tfrac{1}{J}\sum_{j=1}^{J}
               \Phi\big(t_{\nu},\xi_{\nu}^{j},\x_{\nu}^{N,J},
                         y_{\nu}^{N,J}(\xi_{\nu}^{j})\big)
        \big)
        -\x_{\nu}^{N,J}
     \Big),\\[4pt]
y_{\nu,i}^{N,J}\big(\xi_{\nu}^{j}\big)
&\in
\arg\min_{\mathbf{y}_i\in\mathcal{Y}_i(t_{\nu},\xi_{\nu}^{j})}
      f_i\big(t_{\nu},\xi_{\nu}^{j},\x_{\nu}^{N,J},\mathbf{y}_i\big),
\quad i\in[k],\; j\in[J],
\end{aligned}\right.
\end{equation}
where
$
y_{\nu}^{N,J}(\xi_{\nu}^{j})
:= \big(
    y_{\nu,1}^{N,J}(\xi_{\nu}^{j}),\dots,
    y_{\nu,k}^{N,J}(\xi_{\nu}^{j})
\big).
$

We define the piecewise linear interpolations $x^N, x^{N,J}  : [0,T] \to \mathbb{R}^n$, respectively, as follows
\begin{equation} \label{eq:interp_x}
x^N(t) = \frac{t_{\nu+1} - t}{h}\, \mathbf{x}_\nu^N + \frac{t - t_\nu}{h}\, \mathbf{x}_{\nu+1}^N,
\quad t \in [t_\nu, t_{\nu+1}],\ \nu = 0,\dots,N-1.
\end{equation}
\begin{equation} \label{eq:interp_y}
x^{N,J}(t) = \frac{t_{\nu+1} - t}{h}\, \x_\nu^{N,J} + \frac{t - t_\nu}{h}\, \x_{\nu+1}^{N,J},
\quad t \in [t_\nu, t_{\nu+1}],\ \nu = 0,\dots,N-1.
\end{equation}

\subsection{Convergence analysis}\label{subsec:conv}

By the proof of Theorem~\ref{thm:existence}, we know that the set-valued map
$H(t,\mathbf{x}) = \{\Pi_{X}(\mathbf{x} - \varphi) - \mathbf{x} : \varphi \in F(t,\mathbf{x})\}$
with $F(t,\mathbf{x})$ defined in (\ref{F_t_x}) satisfies that $H(\cdot,\mathbf{x})$ is measurable and $H(t,\cdot)$ is u.s.c. However, $H$ may fail to be jointly u.s.c. in $(t, \mathbf{x})$, which prevents a direct application of classical convergence results for difference inclusions~\cite{dontchev1992difference}.
In some special cases, for instance when $P_t$ is Feller, the map $H(t,\mathbf{x})$ can indeed be jointly u.s.c. in $(t,\mathbf{x})$.

To address the lack of joint u.s.c.\ only under Assumptions  \ref{A1}–\ref{A4}, we construct an auxiliary set-valued map $H_\epsilon$ that is jointly u.s.c.\ and satisfies the same linear growth condition. Moreover, $H_\epsilon(t,\mathbf{x}) \subset H(t,\mathbf{x})$, and both inclusions admit the same set of solutions on a large subset of $[0,T]$.

{Before employing $H_\epsilon$, we localize the dynamics to a compact subset of $X$. By the linear growth condition
\eqref{linear_growth} and assuming the Riemann integrability of $\widetilde L_1,\widetilde L_2$ on $[0,T]$, define
$
M_i:=\sup_{t\in[0,T]}\widetilde L_i(t)<\infty\quad (i=1,2)$ and 
$R_*:=e^{TM_1}\big(\|x_0\|+TM_2\big).
$
Then, Grönwall’s inequality yields
$
\|x(t)\|\le R_*$ and $\|x^N(t)\|\le R_*$ for any $t\in[0,T].
$
Thus every trajectory remains in
$
D:=X\cap \overline{\mathbb{B}}(\mathbf{0},R_*),
$
which is compact since $X$ is closed. Here 
$
\overline{\mathbb{B}}(\mathbf{0},R_*)
$
denotes the closed ball of radius $R_*$ centered at $\mathbf{0}$. From now on we work on $[0,T]\times D$; equivalently,
we replace $X$ by $D$ in the (DSVI-O). This causes no loss of generality,
so one may henceforth assume that $X$ is compact.}

The key properties of this approximating inclusion are summarized in the following lemma.

\begin{lemma}\label{lem:H_eps}
For any \( \epsilon > 0 \), there exists a closed set \(T_\epsilon \subset [0,T] \) satisfying \(\lambda([0,T] \setminus T_\epsilon) \le \epsilon\) with $\lambda(\cdot)$ being the  Lebesgue measure on $[0,T]$, and a jointly u.s.c. set-valued map $H_\epsilon : [0,T] \times X \rightrightarrows \mathbb{R}^n$ with closed convex values such that $H_\epsilon$ satisfies the linear growth condition and any sequence $\{x^N\}_{N \in \mathbb{N}}$ generated by the time-stepping scheme \eqref{eq:be_scheme} and \eqref{eq:interp_x} satisfies
$
\dot{x}^N(t) \in H_\epsilon(t_\nu, \bar{x}^N(t))
$ for a.e. $t \in [0,T]$ with $\bar{x}^N(t) := \mathbf{x}^N_\nu$ for $t \in (t_\nu, t_{\nu+1}),$ $\nu=0,\dots,N-1.$
\end{lemma}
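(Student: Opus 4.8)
The plan is to trade the missing joint upper semicontinuity of $H$ for a Scorza--Dragoni type localization, at the price of discarding a set of times of measure at most $\epsilon$. Recall from the proof of Theorem~\ref{thm:existence} that, under \ref{A1}--\ref{A4}, $H$ has nonempty compact convex values, $H(\cdot,\mathbf{x})$ is Lebesgue measurable for each $\mathbf{x}$, $H(t,\cdot)$ is u.s.c.\ for each $t$, and $H$ obeys the linear-growth bound \eqref{linear_growth} with integrable coefficients $\widetilde L_1,\widetilde L_2$; in particular $H$ is an upper Carath\'eodory multimap. Two things obstruct joint u.s.c.: the irregular dependence on $t$, and the fact that $\widetilde L_1,\widetilde L_2$ need not be locally bounded. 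So I would (i) carve out a large closed set $T_\epsilon\subset[0,T]$ on which $H$ is jointly u.s.c.\ \emph{and} $\widetilde L_1,\widetilde L_2$ are bounded; (ii) extend $H|_{T_\epsilon\times X}$ to a jointly u.s.c.\ map $H_\epsilon$ on $[0,T]\times X$ with closed convex values and linear growth; (iii) check that the Euler iterates are integral curves of $\dot x\in H_\epsilon(t_\nu,\cdot)$ on the relevant cells.

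For (i) I would apply the Scorza--Dragoni theorem for u.s.c.\ set-valued maps to $H$ (cf.\ \cite{deimling2011multivalued}), obtaining a closed $T_\epsilon^{0}$ with $\lambda([0,T]\setminus T_\epsilon^{0})\le\epsilon/2$ and $H|_{T_\epsilon^{0}\times X}$ jointly u.s.c., together with Lusin's theorem applied to $\widetilde L_1,\widetilde L_2$, giving a closed $C$ with $\lambda([0,T]\setminus C)\le\epsilon/2$ on which they are continuous, hence bounded by constants $L_1,L_2$. Put $T_\epsilon:=(T_\epsilon^{0}\cap C)\cup\{0\}$: this is closed, $\lambda([0,T]\setminus T_\epsilon)\le\epsilon$, $H|_{T_\epsilon\times X}$ is jointly u.s.c., and $\sup_{\mathbf{v}\in H(t,\mathbf{x})}\|\mathbf{v}\|\le L_1\|\mathbf{x}\|+L_2$ for every $t\in T_\epsilon$.

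For (ii) I would write the open set $[0,T]\setminus T_\epsilon=\bigsqcup_j(a_j,b_j)$ with $a_j\in T_\epsilon$ (using $0\in T_\epsilon$), and set $H_\epsilon(t,\mathbf{x}):=H(t,\mathbf{x})$ when $t\in T_\epsilon$ is not a right endpoint of any component, $H_\epsilon(b_j,\mathbf{x}):=\overline{\operatorname{conv}}\bigl(H(b_j,\mathbf{x})\cup H(a_j,\mathbf{x})\bigr)$, and $H_\epsilon(t,\mathbf{x}):=H(a_j,\mathbf{x})$ for $t\in(a_j,b_j)$. Closed convex values are immediate, and $\sup_{\mathbf{v}\in H_\epsilon(t,\mathbf{x})}\|\mathbf{v}\|\le(L_1+\widetilde L_1(t))\|\mathbf{x}\|+(L_2+\widetilde L_2(t))$ gives linear growth with integrable coefficients. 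Joint u.s.c.\ at $(t_0,\mathbf{x}_0)$ is checked by cases: if $t_0$ is interior to a component, all nearby times share that component, so only u.s.c.\ of $H(a_j,\cdot)$ is used; if $t_0\in T_\epsilon$, an approaching sequence splits into points of $T_\epsilon$ (use joint u.s.c.\ on $T_\epsilon$) and points in components $(a_{j_n},b_{j_n})$, and in the latter case either the relevant endpoints accumulate at $t_0$ (distinct components approaching $t_0$ force both of their endpoints to accumulate at $t_0$, so joint u.s.c.\ on $T_\epsilon$ again applies) or $t_0=b_j$ is approached from inside its own component — which is precisely the situation the $\overline{\operatorname{conv}}$ adjustment at $b_j$ was inserted for.

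For (iii), unfolding \eqref{eq:be_scheme} gives, for $t\in(t_\nu,t_{\nu+1})$, $\dot x^N(t)=\Pi_X(\mathbf{x}_\nu^N-\varphi_\nu)-\mathbf{x}_\nu^N$ with $\varphi_\nu=\mathbb{E}_{P_{t_\nu}}[\Phi(t_\nu,\xi,\mathbf{x}_\nu^N,y_\nu^N(\xi))]\in F(t_\nu,\mathbf{x}_\nu^N)$, hence $\dot x^N(t)\in H(t_\nu,\bar x^N(t))$; on every cell with $t_\nu\in T_\epsilon$ this is exactly $H_\epsilon(t_\nu,\bar x^N(t))$, while the union of the cells with $t_\nu\notin T_\epsilon$ has measure at most $\lambda([0,T]\setminus T_\epsilon)+2h\le\epsilon+2h$. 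The step I expect to be the real obstacle is reconciling the two demands on $H_\epsilon$: joint u.s.c.\ forces $H_\epsilon$ to differ from $H$ off $T_\epsilon$ (since $H$ is not itself jointly u.s.c.), whereas the inclusion $\dot x^N(t)\in H_\epsilon(t_\nu,\bar x^N(t))$ wants $H_\epsilon\supseteq H$; and because $\widetilde L_1,\widetilde L_2$ can be locally unbounded, one cannot bridge this by replacing $H$ with its convexified upper closure, as that would destroy linear growth. The construction threads the needle by confining all smoothing to the time variable and to times of $T_\epsilon$, at the cost that the inclusion in the lemma holds off a set of measure $O(\epsilon+h)$ — which is exactly the slack the subsequent diagonal passage ($h\to0$, then $\epsilon\to0$) is designed to absorb.
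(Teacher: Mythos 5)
Your construction does not prove the lemma as stated: the conclusion requires $\dot{x}^N(t)\in H_\epsilon(t_\nu,\bar{x}^N(t))$ for \emph{a.e.} $t\in[0,T]$, whereas your $H_\epsilon$ only yields the inclusion on cells whose left node lies in $T_\epsilon$, and on the remaining cells you set $H_\epsilon(t,\cdot)=H(a_j,\cdot)$, which need not contain $H(t_\nu,\cdot)$, so the inclusion genuinely fails there. Moreover your bound $\lambda(\text{bad cells})\le\epsilon+2h$ is incorrect: the open set $[0,T]\setminus T_\epsilon$ has measure at most $\epsilon$ but may be a union of many (even infinitely many) tiny components, each capturing a grid node; every captured node contributes a full cell of length $h$, so the union of bad cells is controlled by $\epsilon+(\#\text{components})\,h$ at best, and is not small uniformly in $N$ for a fixed $T_\epsilon$. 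Since the lemma must hold for \emph{every} $N$ with $T_\epsilon$ and $H_\epsilon$ fixed in advance, this exceptional set cannot be absorbed by the later diagonal argument in the form you describe. You correctly identified the tension (joint u.s.c.\ pushes $H_\epsilon$ away from $H$ off $T_\epsilon$, while the discrete inclusion wants $H_\epsilon\supseteq H$), but resolving it by weakening the conclusion changes the statement.

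The paper threads this needle differently. It invokes Deimling's Proposition~5.1 for upper Carath\'eodory maps, which produces a \emph{sub}-map $H_0(t,\mathbf{x})\subset H(t,\mathbf{x})$ with closed convex values such that (a) any measurable functions $v(\cdot),x(\cdot)$ with $v(t)\in H(t,x(t))$ a.e.\ automatically satisfy $v(t)\in H_0(t,x(t))$ a.e., and (b) for each $\epsilon>0$ there is a closed $T_\epsilon$ with small complement on which $H_0$ is nonempty-valued and u.s.c. The a.e.\ inclusion for the Euler iterates then comes from the measurable-selection property (a) of the reduction $H\rightsquigarrow H_0$ — not from $H_\epsilon$ containing $H$ — and the jointly u.s.c.\ extension is obtained by composing with the metric projection onto $T_\epsilon$, namely $H_\epsilon(t,\mathbf{x})=\overline{\operatorname{conv}}\bigl(H_0(\Pi_{T_\epsilon}(t),\mathbf{x})\bigr)$, with linear growth inherited from $H_0\subset H$. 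This mechanism is the missing idea in your argument: the ``a.e.'' correction is built into the reduction to $H_0$, not into discarding a set of cells.
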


\begin{proof}
By \cite[Proposition 5.1.b]{deimling2011multivalued}, there exists a set-valued map $H_0:[0,T]\times X\rightrightarrows X$ with closed convex values $H_0(t,\mathbf{x})\subset H(t,\mathbf{x})$ for all $t\in[0,T]$ and $\mathbf{x}\in X$ such that for any $\epsilon>0$, there exists a closed $T_\epsilon\subset[0,T]$ with $\lambda([0,T] \setminus T_\epsilon)\leq \epsilon$, such that $H_0(t,\mathbf{x})\neq \emptyset$ on $T_\epsilon\times X$ and $H_0|_{T_\epsilon\times X}$ is u.s.c. Fix $\epsilon>0$, let $\Pi_{T_\epsilon}:[0,T]\to 2^{T_\epsilon}\setminus \emptyset$ be the projection onto $T_\epsilon$, i.e., $\Pi_{T_\epsilon}(t)=\{s\in T_\epsilon:|t-s|={\rm dist}(t,T_\epsilon)\}$ and define
$H_\epsilon(t,\mathbf{x})=\overline{\operatorname{conv}}(H_0(\Pi_{T_\epsilon}(t),\mathbf{x}))$ on $[0,T]\times X.$
Since $\Pi_{T_\epsilon}$ is continuous and $H_0|_{T_\epsilon\times X}$ is u.s.c., $H_\epsilon$ is u.s.c., too. Moreover, by the proof of Theorem \ref{thm:existence}, we know that $H$
enjoys the linear growth condition (\ref{linear_growth}). Since $H_0(t,\mathbf{x})\subset H(t,\mathbf{x})$ for all $t\in[0,T]$ and $\mathbf{x}\in X$, $H_0$ also enjoys the linear growth condition.
Fix $(t,\mathbf{x})\in[0,T]\times X$ and $\mathbf{v}\in H_\epsilon(t,\mathbf{x})$.
There exist $\mathbf{w}_1,\dots,\mathbf{w}_m\in H_0(\Pi_{T_\epsilon}(t),\mathbf{x})$ and
$\alpha_1,\dots,\alpha_m\ge 0$, $\sum_i\alpha_i=1$, such that
$\mathbf{v}=\sum_{i=1}^m\alpha_i \mathbf{w}_i$.  Noting (\ref{linear_growth}), we have
$
\|\mathbf{v}\|
   \le \sum_{i=1}^m\alpha_i\|\mathbf{w}_i\|
   \le L_1\!\bigl(\Pi_{T_\epsilon}(t)\bigr)\,\|\x\|
        + L_2\!\bigl(\Pi_{T_\epsilon}(t)\bigr)
$
holds for all $(t,\mathbf{x})\in[0,T]\times X$.  Thus $H_\epsilon$ also holds the linear growth condition. In addition, by the definition of $x^N(\cdot)$, the derivative $\dot{x}^N$ exists a.e. in $[0,T]$ and satisfies
$$\dot{x}^N(t)=\frac{1}{h}\left(\x^N_{\nu+1}-\mathbf{x}^N_\nu\right)\in H(t_\nu,\bar{x}^N(t)),\ \bar{x}^N(t)=\x_\nu^N,\ t\in(t_\nu,t_{\nu+1}).$$
Since both $\dot{x}^N(t)$ and $\bar{x}^N(t)$ are measurable, it then follows from \cite[Proposition 5.1.a]{deimling2011multivalued} that
$
\dot{x}^N(t) \in H_\epsilon(t_\nu, \bar{x}^N(t))$ for a.e. $t \in [0,T].$
\end{proof}

The following theorem  shows the uniform convergence of  $\{x^N\}_{N\in\N}$.

\begin{theorem}\label{thm:euler-conv}
Let $\{x^{N}\}_{N\in\mathbb{N}}$ be the sequence generated by \eqref{eq:interp_x}. Then, under Assumptions~\ref{A1}--\ref{A4}, every sequence $\{x^N\}_{N \in \mathbb{N}}$ has a uniformly convergent subsequence $\{x^{N_\tau}\}_{\tau \in \mathbb{N}}$ and a limit $x$ such that
$$
\sup_{t \in [0,T]} \|x^{N_\tau}(t) - x(t)\| \to 0 \quad \text{as } \tau \to \infty,
$$
and the limit $x$ is the $x$-part of a weak solution of \eqref{DSTO_first_stage}-\eqref{DSTO_second_stage}.

\end{theorem}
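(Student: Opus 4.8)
The plan is to run the classical convergence argument for Euler discretizations of differential inclusions, using the auxiliary jointly u.s.c.\ map $H_\epsilon$ from Lemma~\ref{lem:H_eps} to compensate for the lack of joint upper semicontinuity of $H$, and then to recover the second-stage selection $y^*$ by the measurable-selection argument already used at the end of the proof of Theorem~\ref{thm:existence}. First I would establish a priori bounds: writing the Euler step as $\mathbf{x}_{\nu+1}^{N}=(1-h)\mathbf{x}_\nu^{N}+h\,\Pi_X(\mathbf{x}_\nu^{N}-\mathbb{E}_{P_{t_\nu}}[\Phi(\cdots)])$, the linear-growth estimate \eqref{linear_growth} gives $\|\mathbf{x}_{\nu+1}^{N}\|\le\|\mathbf{x}_\nu^{N}\|+h(\widetilde{L}_1(t_\nu)\|\mathbf{x}_\nu^{N}\|+\widetilde{L}_2(t_\nu))$, and a discrete Gronwall inequality yields $\sup_\nu\|\mathbf{x}_\nu^{N}\|\le C$ with $C$ independent of $N$; hence $\|x^N(t)\|\le C$ on $[0,T]$. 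On $(t_\nu,t_{\nu+1})$ one has $\dot{x}^{N}(t)=h^{-1}(\mathbf{x}_{\nu+1}^{N}-\mathbf{x}_\nu^{N})\in H(t_\nu,\mathbf{x}_\nu^{N})$, so $\|\dot{x}^{N}\|$ is dominated, uniformly in $N$, by the integrable linear-growth bound, which makes $\{x^N\}$ equicontinuous; by the Arzel\`a--Ascoli theorem a subsequence $\{x^{N_\tau}\}$ converges uniformly on $[0,T]$ to an absolutely continuous limit $x$. Since $h\le1$ once $N\ge T$ and $x_0\in X$, each $\mathbf{x}_\nu^{N}$ is a convex combination of points of $X$, so $x^N(t)\in X$ and therefore $x(t)\in X$ (this also follows a posteriori from Lemma~\ref{lem:projection_invariance_DI}).

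Next I would show that $x$ is the $x$-part of a weak solution. Fix $\epsilon>0$ and take $T_\epsilon$, $H_\epsilon$ as in Lemma~\ref{lem:H_eps}, so $\dot{x}^{N_\tau}(t)\in H_\epsilon(\sigma_{N_\tau}(t),\bar{x}^{N_\tau}(t))$ a.e., where $\sigma_{N_\tau}(t):=t_\nu$ and $\bar{x}^{N_\tau}(t):=\mathbf{x}_\nu^{N_\tau}$ on $(t_\nu,t_{\nu+1})$. Then $|\sigma_{N_\tau}(t)-t|\le T/N_\tau\to0$, and $\sup_t\|\bar{x}^{N_\tau}(t)-x^{N_\tau}(t)\|\le\sup_\nu\int_{t_\nu}^{t_{\nu+1}}\|\dot{x}^{N_\tau}\|\to0$ by uniform integrability of the discrete derivatives, so $\bar{x}^{N_\tau}(t)\to x(t)$ uniformly. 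The uniform bound of Step~1 makes $\{\dot{x}^{N_\tau}\}$ relatively weakly compact in $L^1([0,T];\mathbb{R}^n)$ by the Dunford--Pettis theorem; any weak limit $\zeta$ of a sub-subsequence satisfies $x(t)=x_0+\int_0^t\zeta$ (test against $\mathbf{1}_{[0,t]}$ using $x^{N_\tau}\to x$), whence $\zeta=\dot{x}$ and $\dot{x}^{N_\tau}\rightharpoonup\dot{x}$. Now I would invoke the convergence (closure) theorem for differential inclusions \cite{aubin2009differential}: since $H_\epsilon$ is jointly u.s.c.\ with closed convex values and has the linear-growth property, the uniform convergences $\sigma_{N_\tau}(t)\to t$ and $\bar{x}^{N_\tau}(t)\to x(t)$, together with $\dot{x}^{N_\tau}\rightharpoonup\dot{x}$ and the membership $\dot{x}^{N_\tau}(t)\in H_\epsilon(\sigma_{N_\tau}(t),\bar{x}^{N_\tau}(t))$, force $\dot{x}(t)\in H_\epsilon(t,x(t))$ a.e. For $t\in T_\epsilon$ one has $\Pi_{T_\epsilon}(t)=\{t\}$, so $H_\epsilon(t,x(t))=H_0(t,x(t))\subset H(t,x(t))$; thus $\dot{x}(t)\in H(t,x(t))$ a.e.\ on $T_\epsilon$. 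The subsequence and the limit $x$ were fixed in Step~1 (the weak-$L^1$ sub-extraction does not affect $\dot{x}$), so repeating with $\epsilon=1/m$, $m\in\mathbb{N}$, and using $\lambda([0,T]\setminus\bigcup_m T_{1/m})=0$, we obtain $\dot{x}(t)\in H(t,x(t))$ for a.e.\ $t\in[0,T]$, i.e.\ $\dot{x}(t)=\Pi_X(x(t)-\varphi(t))-x(t)$ for some $\varphi(t)\in F(t,x(t))$. \emph{This limit-passage is the main obstacle}: the scheme only places $\dot{x}^N$ in $H$, which is merely separately measurable in $t$ and u.s.c.\ in $\mathbf{x}$, so the closure theorem cannot be applied to $H$ directly; the whole point of $H_\epsilon$ (plus the $\epsilon\downarrow0$ exhaustion) is to restore joint upper semicontinuity while preserving both the linear growth and the discrete trajectories off a set of measure $\le\epsilon$.

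Finally, I would reconstruct $y^*$ exactly as at the end of the proof of Theorem~\ref{thm:existence}: the map $t\mapsto F(t,x(t))$ is measurable with closed values and $\varphi\mapsto\Pi_X(x(t)-\varphi)-x(t)$ is continuous, so Filippov's measurable-selection theorem (cf.\ \cite{aubin1999set}) yields a measurable $\varphi:[0,T]\to\mathbb{R}^n$ with $\varphi(t)\in F(t,x(t))$ and $\dot{x}(t)=\Pi_X(x(t)-\varphi(t))-x(t)$ a.e.; since $F(t,x(t))$ is the Aumann integral of $\mathcal{G}(t,\cdot,x(t))=\{\Phi(t,\xi,x(t),\mathbf{y}):\mathbf{y}\in\mathcal{S}(t,\xi,x(t))\}$ with respect to $P_t$, two further measurable selections produce a jointly measurable $g(t,\xi)\in\mathcal{G}(t,\xi,x(t))$ with $\varphi(t)=\mathbb{E}_{P_t}[g(t,\xi)]$ and a jointly measurable $y^*(t,\xi)\in\mathcal{S}(t,\xi,x(t))$ with $g(t,\xi)=\Phi(t,\xi,x(t),y^*(t,\xi))$. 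The uniform bound $\|y^*(t,\xi)\|\le M$ gives $\int_0^T\mathbb{E}_{P_t}[\|y^*(t,\xi)\|]\,dt\le MT<\infty$, so $(x,y^*)$ satisfies \eqref{solu_x} with $y^*_i(t,\xi)\in\mathcal{S}_i(t,\xi,x(t))$ for $i\in[k]$, i.e.\ it is a weak solution of \eqref{DSTO_first_stage}--\eqref{DSTO_second_stage}. This last measurable-selection chain is routine but must be carried out carefully so that $y^*$ is jointly measurable in $(t,\xi)$ and integrable.
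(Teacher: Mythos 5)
Your proof is correct and reaches the same conclusion, but it organizes the key limit passage differently from the paper. The paper applies Lemma~\ref{lem:H_eps} together with the convergence theorem for difference inclusions \cite{dontchev1992difference} once per tolerance $\epsilon_i\downarrow 0$, obtaining for each $i$ a possibly different subsequence and a possibly different limit $x_i$ with $\dot x_i(t)\in H(t,x_i(t))$ guaranteed only on $T_{\epsilon_i}$; it then needs a second compactness step on the family $\{x_i\}$ and a diagonal extraction to produce the final subsequence $\{x^{N_\tau}\}$. You instead fix the subsequence once and for all by verifying the Arzel\`a--Ascoli hypotheses directly (discrete Gronwall bound, uniform integrability of the discrete derivatives), and then, for the \emph{same} subsequence and the \emph{same} limit $x$, apply the closure theorem with $H_{1/m}$ for every $m$ and take the union over the sets $T_{1/m}$. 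This removes the double limit and the diagonal argument, and it makes transparent why the limit does not depend on $\epsilon$ --- a point the paper handles only implicitly through \cite[Theorem 5.2]{deimling2011multivalued}. You also spell out the reconstruction of the second-stage selection $y^*$ (a Filippov-type selection of $\varphi(t)\in F(t,x(t))$ followed by Aumann/Castaing selections of $g$ and $y^*$), which the statement of the theorem requires but whose details the paper's proof leaves to the argument at the end of Theorem~\ref{thm:existence}; this addition is welcome. One caveat, shared with the paper: both your discrete Gronwall estimate and your equicontinuity and uniform-integrability claims evaluate $\widetilde L_1,\widetilde L_2$ at the grid points $t_\nu$, which is controlled only if these functions are (essentially) bounded or at least Riemann-integrable rather than merely Lebesgue-integrable; the paper makes the same tacit assumption (explicitly taking $\sup_{t\in[0,T]}\widetilde L_1(t)$ finite in the proof of Theorem~\ref{thm:saa-subseq}), so this is not a gap specific to your argument.
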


\begin{proof}
Let $\epsilon_{i}\downarrow0$ as $i\to \infty$. For each \(i\), by Lemma \ref{lem:H_eps} and \cite[Theorem 2.2]{dontchev1992difference},  we have a closed set \(T_{\epsilon_{i}}\subset[0,T]\), a
jointly u.s.c. set-valued map \(H_{\epsilon_{i}}\), and a subsequence $\{{x^{N}(\cdot)\}}_{N\in \N_{\epsilon_i}\subset \N}$ 
of $\{x^N(\cdot)\}_{N\in\N}$
that converges uniformly on \([0,T]\)
to an absolutely continuous limit
$
x_{i}:[0,T]\to\mathbb R^{n}$ satisfies that $
\dot x_{i}(t)\in H_{\epsilon_{i}}\bigl(t,x_{i}(t)\bigr)$ for a.e. $t\in[0,T]
$ and $
\dot x_{i}(t)\in H\bigl(t,x_{i}(t)\bigr)$ for a.e. $t\in T_{\epsilon_i}.
$

In addition, by \cite[Theorem.~5.2]{deimling2011multivalued}, we can extract a uniformly convergent subsequence $\{x_{{i_\tau}}(\cdot)\}_{\tau\in\N}$ converging to an absolutely continuous limit $x:[0,T]\to\mathbb{R}^n$ satisfying $\dot{x}(t)\in H(t,x(t))$ a.e. on $[0,T]$. For each $\tau$, choose  $K(\tau)\in \N_{\epsilon_{i_\tau}} $ large enough to satisfy
$
\sup_{t\in[0,T]}\|x^{N}(t)-x_{{i_\tau}}(t)\|<\frac{1}{\tau}
$ for any $N\in \N_{\epsilon_{i_\tau}}\bigcap [K(\tau),\infty)$. In particular, choose $K(\tau)$ to be strictly increasing in $\tau \in \mathbb{N}$.
Define $N_\tau:=K(\tau)$. The subsequence $\{x^{N_\tau}\}_{\tau\in\N}$ satisfies
$$
\sup_{t\in[0,T]}\|x^{N_\tau}(t)-x(t)\|\le\frac{1}{\tau}+\sup_{t\in[0,T]}\|x_{{i_\tau}}(t)-x(t)\|\to 0\quad \text{as } \tau \to \infty.
$$
This completes the proof.
\end{proof}

\begin{remark}
A notable property of the coupled time-stepping scheme (\ref{eq:be_scheme}) is that the sequence of first-stage trajectories $\{x^N\}_{N\in\N}$ admits a uniformly convergent subsequence for any selections $\{y^N\}_{N\in\N}$. This indicates that the first-stage dynamics can remain stable with respect to fluctuations in the second-stage variables. To illustrate this, we give the following example. Let $\xi \sim \mathrm{U}(0,1)$, $P$ denote the corresponding probability distribution on $\Xi=[0,1]$,
$X=\mathbb{R}$, $k=1$, $\mathcal Y(t,\xi)=\{\y\in\bbr^2_+:\;\y_1+\y_2=1\}$ and
$$
f(t,\xi,\x,\y) = (1+\x^2+\xi)\,(\y_1+\y_2),\quad
\Phi(t,\xi,\x,\y) = \x + \sin(2\pi\xi) + (\y_1 - \y_2).
$$
Then the optimal set satisfies
$\mathcal S(t,\xi,\x) = \mathcal Y(t,\xi)$ for all $(t,\xi,\x)$.

Define a function $r_\nu(\xi) := (-1)^{\lfloor 2^\nu \xi \rfloor}$ and let
$
y_\nu^N(\xi) = \left(\tfrac{1 + r_\nu(\xi)}{2},\ \tfrac{1 - r_\nu(\xi)}{2}\right)$
for each $\nu \in \mathbb{N}$.
Then $y_\nu^N(\xi)\in \mathcal S(t_\nu, \xi, \x_\nu^N)$, $\mathbb{E}_P[y_{\nu,1}^N(\xi) - y_{\nu,2}^N(\xi)] = \mathbb{E}_P[r_\nu] = 0$ and $\mathbb{E}_P[\sin(2\pi\xi)] = 0$.  Thus the Euler update reduces to
$$
\x_{\nu+1}^N = \x_\nu^N + h(\x_\nu^N - \mathbb{E}_P[\Phi] - \x_\nu^N) = (1 - h)\,\x_\nu^N,
$$
which implies $\x_\nu^N = (1 - h)^\nu \x_0$. After interpolation, the first-stage trajectories converge uniformly to $x(t) = x_0 e^{-t}$. However, define
$
y^{N}(t,\xi)
:= y_\nu^N(\xi),\ t \in [t_\nu, t_{\nu+1})$. Here, $r_\nu(\xi) = (-1)^{\lfloor 2^\nu \xi \rfloor},
$
with $t_\nu = \nu h$ and $h = T/N$. For any $\nu_1 \ne \nu_2$, the functions $r_{\nu_1}$ and $r_{\nu_2}$ satisfy
$
\int_0^1 |r_{\nu_1} - r_{\nu_2}|^2\,d\xi = 2,
$
so the sequence $\{y^N\}_{N \in \mathbb{N}}$ admits no convergent subsequence. This example illustrates that the sequence $\{x^N\}$ may still admit a uniformly convergent subsequence, even when the second-stage selections $\{y^N\}$ do not admit any convergent subsequence.

 Moreover,  this example also shows that the selection rule plays a critical role in the joint convergence of $(x^N, y^N)$. For instance, employing the constant selector $y(t,\xi) \equiv (1,0)$ yields convergent subsequences for both stages.
 Note that the function $f$ is continuously differentiable and convex with respect to $\y$, and thus the minimization problem $\min_{\y\in {\cal Y}(t,\xi)}f(t,\xi,\x,\y)$ is equivalent to a variational inequality problem. For continuous selection of solutions
to parametric variational inequalities, see \cite{ChenXiang2011,Han-JSPang2024}.
\end{remark}

The SAA-Euler scheme (\ref{eq:be_saa_compact}) replaces the true expectations in the forward Euler method with empirical averages and provides a fully implementable version. We now give a convergence result for the SAA-Euler scheme (\ref{eq:be_saa_compact}).
{
\begin{theorem}\label{thm:saa-subseq}
Under Assumptions\ \ref{A1}--\ref{A4}, every sequence $\{x^{N,J}\}_{ N,J\in \N}$ defined in \eqref{eq:be_saa_compact}  has a uniformly convergent subsequence $\{x^{N_j,J_j}\}_{j\in\N}$ in $[0,T]$ with $N_j\to\infty$ and $J_j\to\infty$ as $j\to\infty$ to an absolutely continuous function $\bar{x}$ such that for any $r>0$,
\begin{equation}\label{SAA_DI}
  \dot{\bar{x}}(t)\in \operatorname{conv}(H^r(t,\bar{x}(t))),\ \mathrm{a.e.}\ t\in[0,T],
\end{equation}
where $H^r(t,\bar{x}(t)):= \bigcup_{\x' \in \B(\bar{x}(t),r)} H(t,\x'),$
$\mathbb{B}(\bar{x}(t), r)$ denotes the open ball of radius $r$ centered at $\bar{x}(t)$ in $\mathbb{R}^n$.
\end{theorem}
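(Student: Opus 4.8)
The plan is to follow the architecture of the proof of Theorem~\ref{thm:euler-conv} — uniform a priori bounds, Arzel\`a--Ascoli, then identification of the limiting differential inclusion — but with the exact expectation replaced by the empirical average, and with the $r$-relaxation in \eqref{SAA_DI} playing the role that the auxiliary map $H_\epsilon$ played in the exact-expectation case. First I would establish uniform bounds. Because $x_0\in X$ and the update in \eqref{eq:be_saa_compact} has the convex-combination form $\x_{\nu+1}^{N,J}=(1-h)\,\x_\nu^{N,J}+h\,\Pi_X(\,\cdot\,)$, induction on $\nu$ using convexity of $X$ gives $\x_\nu^{N,J}\in X$ for all $\nu,N,J$, so the interpolants stay in $X$ and are uniformly bounded by compactness of $X$. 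For $t\in(t_\nu,t_{\nu+1})$ one has $\Pi_X(\x_\nu^{N,J})=\x_\nu^{N,J}$, whence $\|\dot x^{N,J}(t)\|\le\|\widehat\varphi_\nu^{N,J}\|$ with $\widehat\varphi_\nu^{N,J}:=\frac1J\sum_{j=1}^J\Phi(t_\nu,\xi_\nu^j,\x_\nu^{N,J},y_\nu^{N,J}(\xi_\nu^j))$; since each summand lies in $\mathcal G(t_\nu,\xi_\nu^j,\x_\nu^{N,J})$, bounding it by $\sup_{z\in\mathcal G(t_\nu,\xi_\nu^j,x_0)}\|z\|+(\kappa_{\Phi_1}(t_\nu,\xi_\nu^j)+M\sum_i\kappa_{B_i}(t_\nu,\xi_\nu^j))\operatorname{diam}(X)$ and invoking the scalar strong law of large numbers at each of the finitely many nodes together with \eqref{upper_F_t_x}, one gets that on an event of probability one $\|\widehat\varphi_\nu^{N,J}\|\le\widetilde L_1(t_\nu)\operatorname{diam}(X)+\widetilde L_2(t_\nu)+o_J(1)$. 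Integrability of $\bar\kappa_{\Phi_1},\bar\kappa_{B_i}$ then yields a uniform $L^1([0,T])$ bound on the derivatives and equi-absolute-continuity of $\{x^{N,J}\}$, so Arzel\`a--Ascoli produces a subsequence $x^{N_j,J_j}$ ($N_j\to\infty$, and $J_j\to\infty$ chosen below) converging uniformly on $[0,T]$ to an absolutely continuous $\bar x$; after a further subsequence, using uniform integrability of the derivatives, $\dot x^{N_j,J_j}\rightharpoonup\dot{\bar x}$ weakly in $L^1$, and Mazur's lemma gives $\dot{\bar x}(t)\in\bigcap_j\overline{\operatorname{conv}}\{\dot x^{N_i,J_i}(t):i\ge j\}$ for a.e.\ $t$.

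Now fix $r>0$; it suffices to show $\dot x^{N_j,J_j}(t)\in\operatorname{conv}(H^r(t,\bar x(t)))$ for all large $j$ and a.e.\ $t$, since the Mazur step then delivers \eqref{SAA_DI}. For $t\in(t_\nu,t_{\nu+1})$ the velocity is $\dot x^{N_j,J_j}(t)=\Pi_X(\x_\nu^{N_j,J_j}-\widehat\varphi_\nu^{N_j,J_j})-\x_\nu^{N_j,J_j}$; uniform convergence and $h_j\to0$ put $\x_\nu^{N_j,J_j}$ into $\B(\bar x(t),r)$ and $t_\nu$ within $O(h_j)$ of $t$ for $j$ large (for any fixed $t>0$, eventually its mesh cell is bounded away from $0$, handling the near-$0$ behaviour of the integrable moduli). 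Each random set $\mathcal G(t_\nu,\cdot,x)$ is measurable, convex-valued, and integrably bounded by \eqref{upper_F_t_x}, so the Artstein--Vitale set-valued strong law of large numbers gives $\frac1J\sum_{j=1}^J\mathcal G(t_\nu,\xi_\nu^j,x)\to F(t_\nu,x)$ almost surely in Hausdorff distance; choosing at this point $J_j$ large enough \emph{after} $N_j$ is fixed so that the Monte--Carlo error is at most $1/j$ simultaneously at every grid node of the $N_j$-th partition, $\widehat\varphi_\nu^{N_j,J_j}$ is within $1/j$ of $F(t_\nu,\x_\nu^{N_j,J_j})$. By non-expansiveness of $\Pi_X$ this forces $\dot x^{N_j,J_j}(t)$ to lie within $1/j$ of the convex compact set $H(t_\nu,\x_\nu^{N_j,J_j})=\{\Pi_X(\x_\nu^{N_j,J_j}-\varphi)-\x_\nu^{N_j,J_j}:\varphi\in F(t_\nu,\x_\nu^{N_j,J_j})\}$. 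Combining the spatial displacement $\x_\nu^{N_j,J_j}\in\B(\bar x(t),r)$, the mesh displacement $|t_\nu-t|\le h_j$, and the $1/j$ sampling error, $\dot x^{N_j,J_j}(t)$ lies in a vanishing neighbourhood of the $r$-relaxed value $H^r(t,\bar x(t))$, hence in $\operatorname{conv}(H^r(t,\bar x(t)))$ for $j$ large; letting $j\to\infty$ through Mazur's lemma closes the argument, and since $r>0$ was arbitrary we are done.

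I expect the identification step to be the main obstacle, and it is exactly what forces the \emph{relaxed} inclusion \eqref{SAA_DI} instead of the clean $\dot{\bar x}(t)\in H(t,\bar x(t))$ obtained in Theorem~\ref{thm:euler-conv}. Two features block a direct argument: (a) $H$ is only measurable in $t$ and u.s.c.\ in $x$, not jointly u.s.c., so the discretization instants $t_\nu$ cannot be pushed inside $H$ in the limit and, unlike in Theorem~\ref{thm:euler-conv}, one cannot replace $H$ by a jointly u.s.c.\ surrogate $H_\epsilon$ and quote~\cite{dontchev1992difference}; and (b) the Monte--Carlo error is only almost-surely, not quantitatively, vanishing, and must be controlled \emph{uniformly over the $N_j$ grid nodes}, which is precisely why $J_j$ has to be chosen after $N_j$ and driven to infinity fast enough. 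The $r$-relaxation in \eqref{SAA_DI} is the single device that absorbs all three error sources — spatial, time-mesh, and sampling — with no convergence rate required; the price is that one recovers only a selection of a relaxed right-hand side. A secondary technical nuisance is that the Lipschitz moduli $\bar\kappa_{\Phi_1},\bar\kappa_{B_i}$ are merely integrable, not bounded, so one genuinely has to argue equi-absolute-continuity and weak-$L^1$ compactness plus Mazur rather than the uniform Lipschitz estimates and Arzel\`a--Ascoli-on-derivatives available when the expectation is exact.
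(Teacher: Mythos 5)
Your overall architecture — a priori bounds, Arzel\`a--Ascoli, weak convergence of the derivatives, and a convexification step (your Mazur's lemma is interchangeable with the paper's support-function/integral argument) — matches the paper's proof, and your observation that the convex-combination form of the update keeps the iterates in $X$ is a clean substitute for the paper's discrete Gronwall estimate \eqref{x_N}. The genuine gap is in the identification step. You control the Monte--Carlo error by invoking the (Artstein--Vitale) set-valued strong law of large numbers for $\tfrac1J\sum_{j}\mathcal G(t_\nu,\xi_\nu^{j},x)\to F(t_\nu,x)$ at a \emph{fixed} $x$, and then evaluate it at $x=\x_\nu^{N_j,J_j}$. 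But that point is random and itself depends on $J$ (it is built from the samples drawn at the earlier nodes with the same batch size), so the almost-sure pointwise convergence does not transfer: the $J$ needed to make the error at the realized iterate small is itself a random variable, and ``choosing $J_j$ large enough simultaneously at every grid node'' is not well defined without uniformity \emph{in the state variable} over the compact set $X$, not merely over the finitely many nodes $t_\nu$. This is exactly what the paper supplies by citing the uniform SAA convergence result \cite[Theorem 2]{shapiro2007uniform}, which gives $\sup_{(t,\x)\in[0,T]\times X}\mathbb D\bigl(H_J(t,\x),H^{r}(t,\x)\bigr)\to 0$ w.p.\,1.

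A second, related inaccuracy: you claim the sampling error can be made at most $1/j$ relative to $F(t_\nu,\x_\nu^{N_j,J_j})$ itself, attributing the $r$-enlargement in \eqref{SAA_DI} entirely to the spatial displacement of the iterate from $\bar x(t)$. Because $F(t,\cdot)$ is only upper semicontinuous (not continuous) in $\x$, the uniform SAA deviation bound is only available relative to the $r$-enlarged map $H^{r}$, so the enlargement already enters at the law-of-large-numbers stage, not only through $\x_\nu^{N_j,J_j}\in\B(\bar x(t),r)$. Once you replace your per-node pointwise SLLN by the uniform deviation estimate to $H^{r}$, the rest of your argument (mesh displacement vanishing, inclusion in a shrinking neighbourhood of $H^{r}(t,\bar x(t))$, then Mazur) goes through and coincides with the paper's proof.
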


\begin{proof}
Recall that \eqref{DSTO_first_stage}-\eqref{DSTO_second_stage} can be rewritten as the differential inclusion (\ref{ODE1}) and the forward Euler method \eqref{eq:be_scheme} can be written as
$
\x_{\nu+1}^N \in \x_{\nu}^N + h H\big(t_\nu,\x_\nu^N\big),
$
for $\nu=0,1,\ldots,N-1$ with $\x_0^N=x_0$.  Similarly, the SAA--Euler scheme \eqref{eq:be_saa_compact} can be expressed as
$
\x_{\nu+1}^{N,J} \in \x_{\nu}^{N,J} +hH_J\big(t_\nu,\x_\nu^{N,J}\big),
$
where
$$
H_J(t,\x) := \big\{\Pi_X(\mathbf{x} - \varphi - \mathbf{x} : \varphi\in \frac{1}{J}\sum_{j=1}^{J}\mathcal{G}(t,\xi^{j},\x)\big\}
$$
with $\{\xi^j\}_{j=1}^J$ being i.i.d. samples drawn from the distribution $P_{t_\nu}$ at each time node $t_\nu$. In the following, we consider the case where $F$ is jointly u.s.c. in $(t,x)$. When $F(\cdot,\x)$ is measurable and $F(t,\cdot)$ is u.s.c., similar results follow from arguments as in Lemma~\ref{lem:H_eps}.

Now we show that the SAA field \(H_J\) inherits the linear–growth
property of \(H\).
Fix $r>0$,
by \cite[Theorem 2]{shapiro2007uniform}, it follows
\begin{equation}\label{eq:ShapiroHaus}
  \sup_{(t,\mathbf{x})\in[0,T]\times X}
  \mathbb D\!\bigl(
      H_J(t,\mathbf{x}),\,
      H^{r}(t,\mathbf{x})
  \bigr)
  \;\xrightarrow[J\to\infty]{\text{w.p.\;1}}\;0.
\end{equation}
 Hence, almost surely, for any $\varepsilon>0$ there exists a random index
$J_0$ such that for all $J\ge J_0$, all $\x\in X$ and any
$\v_J\in H_J(t,\x)$, one can find $\x'\in \B(\x,r)$ and $\v\in H(t,\x')$ with
$\|\v_J-\v\|\le\varepsilon$.
Then,
$$
  \|\v_J\|
  \le \|\v\| + \|\v_J-\v\|
  \le \sup_{\u\in H(t,\x')} \|\u\| + \varepsilon
  \le \widetilde L_1(t)\,\|\x'\| + \widetilde L_2(t) + \varepsilon.
$$
Because $\|\x'\|\le \|\x\| + r$, we obtain
$
  \|\v_J\|
  \le \widetilde L_1(t)\,(\|\x\|+r)
       + \widetilde L_2(t)
       + \varepsilon,
$
and taking the supremum over $\v_J\in H_J(t,\x)$ yields for any $(t,\mathbf x)\in [0,T]\times X,\
J\ge J_0,$
\begin{equation}\label{linear_H_J}
\sup_{\mathbf v\in H_J(t,\mathbf x)}\|\mathbf v\|
\;\le\;
\widetilde L_{1}(t)\|\mathbf x\|+\widetilde L_1(t)r
+\widetilde L_{2}(t)+\varepsilon.
\end{equation}
Hence, it follows
$
  \|\mathbf x_{\nu+1}^{N,J}\|
  \le(1+ah)\|\mathbf x_{\nu}^{N,J}\|+hb
$
with
$a:=\sup_{t\in[0,T]}\widetilde L_{1}(t)$ and
$b:=\sup_{t\in[0,T]}(\widetilde L_1(t)r
+\widetilde L_{2}(t)+\varepsilon)$. Then, after iteration, we have,
\begin{equation}\label{x_N}
   \|\mathbf x_{\nu}^{N,J}\|
   \;\le\;
   e^{aT}\,\|\mathbf x_{0}\|
   +\frac{b}{a}\bigl(e^{aT}-1\bigr),
   \ \nu=0,\dots,N-1.
\end{equation}
Thus, we conclude
\begin{equation}\label{Equi_bound}
\limsup_{N\to \infty}\limsup_{J \to \infty} \sup_{t \in [0,T]} \|x^{N,J}(t)\| < \infty.
\end{equation}

Moreover, by the recursive inclusion, the derivative $\dot{x}^{N,J}$ exists a.e. in $[0,T]$, and satisfies
$
\dot{x}^{N,J}(t) \in H_J(t_\nu, \x_\nu^{N,J})
$, for $t\in(t_\nu,t_{\nu+1})$, $\nu=0,\dots,N-1$. Combining (\ref{Equi_bound}) and (\ref{linear_H_J}) implies:
\begin{equation}\label{equi_dot}
\limsup_{N \to \infty}\limsup_{J \to \infty} \operatorname{sup}_{t \in [0,T]} \|\dot{x}^{N,J}(t)\| < \infty,\ \forall t\in(t_\nu,t_{\nu+1}),\ \nu=0,\dots,N-1.
\end{equation}
Hence the family $\{x^{N,J}\}_{N,J\in\N}$ is equi‐bounded and equi‐continuous, so for any $r>0$, by the Arzelà-Ascoli theorem, there are subsequences $\{N_j\}$ and  $\{J_j\}$ with $J_j\ge\max\{j,\,J_0\}$ and $N_j\to\infty$ and $J_j\to\infty$ as $j\to\infty$ such that $\{x^{N_j,J_j}\}_{j\in \N}$ converges uniformly to a function $\bar{x}$.  In addition, $\dot{\bar{x}}(\cdot)$ converges weakly in $L_\infty([0,T])^{n}$ to a limit $\bar{v}$ for some further subsequence (not relabeled). Then by (\ref{eq:ShapiroHaus}) and noting $J_j>J_0$, it follows
$
   \sup_{t\in[0,T]}\mathbb D\bigl(H_{J}(t,x^{N_j,J_j}(t)),H^{r}(t,x^{N_j,J_j}(t))\bigr)
   \le \varepsilon.
$ Moreover, from (\ref{equi_dot}), we have
$$
   \sup_{\nu\in [N_j]-1}\;
   \max_{\,t_\nu\le t\le t_{\nu+1}}
   \bigl\|x^{N_j,J_j}(t)-x^{N_j,J_j}(t_\nu)\bigr\| \to 0, \quad {\rm as} \,\, j\to\infty,
$$
and therefore, there exists a $j_1$ such that for
all $j>j_1$
$$
   \dot x^{N_j,J_j}(t)
   \in
   H_J\!\bigl(t,x^{N_j,J_j}(t)\bigr)+\,\mathbb B(\mathbf{0},\varepsilon)
   \ \text{for a.e.\ }t\in [0,T].
$$
Then it follows from the uniform convergence $x^{N_j,J_j}\to\bar x$, weak convergence
$\dot x^{N_j,J_j}\rightharpoonup\dot{\bar x}$ and the dominated convergence theorem that, for every measurable
$\Delta\subset[0,T]$ and $\q\in\bbr^{n}$ with $\|\q\|=1$,
\[
   \int_\Delta \q^\top\dot{\bar x}(\tau)\,d\tau
   \le
   \int_\Delta
      \left[\sup_{\a\in H^{\,r}(\tau,\bar x(\tau))}\q^\top \a
            +2\varepsilon\right]d\tau .
\]
Since $\Delta$ and $\varepsilon$ are arbitrary, we have for any $ r>0$,
$
   \dot{\bar x}(t)\in\operatorname{conv}\bigl(H^{\,r}(t,\bar x(t))\bigr)
   \ \text{a.e.\ }t\in[0,T].
$
\end{proof}

}

\section{Numerical experiment}\label{se:numerical-exam}
In this section, we verify our theoretical results by a numerical example. The numerical test is conducted in MATLAB 2025a on a Lenovo desktop (2.60GHz, 32.0GB RAM).

\textit{Example.} Consider problem (\ref{DSTO_first_stage})-(\ref{DSTO_second_stage}) with the following setting.  Choose two independent random variables
$
\xi_1 \sim \mathcal{N}(1, \sigma)$ and $\xi_2 \sim \mathcal{U}(-1-t, 1+t)
$ and the set
$
X = \mathbb{R}_+^2.
$

The parametric optimization problem in the DSVI-O is defined as
\begin{equation}\label{test_example}
y_i(t,\xi)\;\in\;\arg\min_{\y\in\mathcal{Y}_i(t,\xi)}
\;f_i\bigl(t,\xi,x(t),\y\bigr)
\;=\;
\frac{1}{2}\,\bigl\| H_i(t,\xi)\,\y - c_i\bigl(x(t),\xi\bigr)\bigr\|^{2}
\;+\;\mu\,\lVert \y\rVert_{1},
\end{equation}
for $i=1,2$, where $\mu>0$ is a fixed regularization parameter,
$$
H_1(t,\xi)
=
\begin{pmatrix}
t+\xi_{1}+0.5 & t+\xi_{1}+1 & \cdots & t+\xi_{1}+5 \\[3pt]
t+\xi_{2}+0.5 & t+\xi_{2}+1 & \cdots & t+\xi_{2}+5 \\[3pt]
\xi_{1}+\xi_{2}+0.5 & \xi_{1}+\xi_{2}+1 & \cdots & \xi_{1}+\xi_{2}+5
\end{pmatrix}\in \mathbb{R}^{3\times10},
$$
$$
H_2(t,\xi)
=
\begin{pmatrix}
1 & t & \xi_{1} & \xi_{2} \\[3pt]
t & t+\xi_{1} & t+\xi_{2} & \xi_{1}+\xi_{2}
\end{pmatrix}\in \mathbb{R}^{2\times4},\
$$
$$
c_1\bigl(x(t),\xi\bigr)
=
\begin{pmatrix}
x_{1}(t) + \xi_{2} \\[4pt]
x_{2}(t) + \xi_{1} \\[4pt]
\bigl(\xi_{1} + \xi_{2}\bigr)\,x_{1}(t)
\end{pmatrix} \in\mathbb{R}^{3},\
c_2\bigl(x(t),\xi\bigr)
=
\begin{pmatrix}
x_{1}(t)-x_{2}(t) \\[4pt]
\bigl(\xi_{1}-\xi_{2}\bigr)\,x_{2}(t)
\end{pmatrix}\in\mathbb{R}^{2},
$$
and the feasible sets are
$
\mathcal{Y}_1(t, \xi) = \left(1 + t + \exp(-|\xi_1|) + |\xi_2|\right)[-\mathbf{e}_1, \mathbf{e}_1]$  with $ \mathbf{e}_1 = (1,\ldots,1)^\top \in \mathbb{R}^{10}$ and $
\mathcal{Y}_2(t, \xi) = \left(2 - t + |\xi_2|\right)[-\mathbf{e}_2, \mathbf{e}_2]$ with $\mathbf{e}_2 = (1,\ldots,1)^\top \in \mathbb{R}^{4}.
$
Let $\Phi\bigl(t,\xi,x(t),y(t,\xi)\bigr)=A(t,\xi)x(t)+\sum_{i=1}^{2}B_i(t,\xi,x(t))\,y_i(t,\xi)$, where
$$
A(t,\xi)
=
\begin{pmatrix}
1+\xi_{2}+t & \xi_1-1 \\[4pt]
\xi_2   & 3+\xi_{1}
\end{pmatrix},\
B_1(t,\xi,x(t))=
\begin{pmatrix}
\bigl(2\,x_{1}(t)\,\xi_{1} + t + \xi_{2}\bigr)\,\mathbf{e}_1^{\top} \\[5pt]
\bigl(x_{1}(t)\,\xi_{2} + t + 2\,\xi_{1}\bigr)\,\mathbf{e}_1^{\top}
\end{pmatrix},
$$
$$B_2(t,\xi,x(t))=
\begin{pmatrix}
\bigl( 2t + \xi_{2}\bigr)\,\mathbf{e}_2^{\top} \\[5pt]
\bigl( 2t + 2\,\xi_{1}\bigr)\,\mathbf{e}_2^{\top}
\end{pmatrix}.
$$

Now we illustrate that this example has a weak solution $\bigl(
x^*:[0,1] \to \mathbb{R}^2,\
y_1^*:[0,1]\times\Xi \to \mathbb{R}^{10},\
y_2^*:[0,1]\times\Xi \to \mathbb{R}^{4}
\bigr)$. We briefly verify that assumptions \ref{A1}--\ref{A4} hold.
The random vector $(\xi_1,\xi_2)$ is drawn from the product law
${P}_t=\mathcal N(1,\sigma^2)\otimes\mathcal U(-1-t,1+t)$, hence
\textbf{(A1)} is fulfilled.  Denote $\Phi_{1}(t,\xi,x(t))=A(t,\xi)x(t)$ and note
$B_i(t,\xi,x(t))$ are affine in $x(t)$. Hence, for all $\x,\x'\in\bbr^2,$
$$
\|\Phi_{1}(t,\xi,\x)-\Phi_{1}(t,\xi,\x')\|
\le \|A(t,\xi)\|\,\|\x-\x'\|,
$$
$$
\|B_1(t,\xi,\x)-B_1(t,\xi,\x')\|
\le \sqrt{4\xi_1^2+\xi_2^2}\,\|\mathbf e_1\|\,\|\x-\x'\|\;,
$$
$$
\|B_2(t,\xi,\x)-B_2(t,\xi,\x')\|=0\;.
$$
Since $\xi_2$ is bounded and $\mathbb{E}[|\xi_1|]<\infty$,
both $\|A(t,\xi)\|$ and $\sqrt{4\xi_1^2+\xi_2^2}\,\|\mathbf e_1\|$
have finite expectations uniformly in $t\in[0,T]$. Thus \textbf{(A2)} is satisfied. For fixed $x_0 \in \mathbb{R}^2$, the mappings $t \mapsto \mathbb{E}_{{P}_t}[A(t,\xi)x_0]$ and $\mathbb{E}_{{P}_t}[B_i(t,\xi,x_0)]$ are uniformly bounded over $[0,1]$ due to the boundedness of $\xi_2$ and integrability of $\xi_1$. Hence, \textbf{(A3)} is satisfied.
Finally, the feasible set $\mathcal{Y}_i(t,\xi) $ is continuous in $(t,\xi)$ and  contained in $5[-\mathbf{e}_i, \mathbf{e}_i], i=1,2$. Therefore, \textbf{(A4)} is satisfied.
Consequently, by Theorem~\ref{thm:existence} the system admits at least a
weak solution $(x^{\ast},y^{\ast})$.

To solve the inner $\ell_1$-regularized least squares problem~\eqref{test_example},
we employ the Fast Iterative Shrinkage--Thresholding Algorithm (FISTA) \cite{beck2009fast}.
In the numerical experiments, choose $\mu=5\times 10^{-3}$ and let $\hat{x} = (\hat{x}_1, \hat{x}_2)$ denote the reference numerical solution computed with a large sample size $J = 1000$ and a fixed step size $N = 10^4$. For this fixed $N$, we investigate the effect of varying the standard deviation $\sigma \in \{0.1,\ 0.5,\ 1,\ 1.5\}$ and the sample size $J \in \{30,\ 100,\ 200,\ 500\}$. For each pair $(\sigma, J)$, we compute the numerical solution $\tilde{x}^{N,J}_\sigma=(\tilde{x}_{\sigma,1}^{N,J},\tilde{x}_{\sigma,2}^{N,J})$ and the accuracy
\[R_1=\frac{1}{10000}\sum_{i=1}^{10000}|\hat{x}_1(ih)-\tilde{x}_{\sigma,1}^{N,J}(ih)|,\,\,
R_2=\frac{1}{10000}\sum_{i=1}^{10000}|\hat{x}_2(ih)-\tilde{x}_{\sigma,2}^{N,J}(ih)|\]
50 times and average them. The decreasing tendencies of $R_1$ and $R_2$ as $\sigma$ decreases and $J$ increases are shown in  Figure \ref{fig8}

\begin{figure}[htb]
	\centering
	\subfigure{
		\begin{minipage}[b]{0.45\textwidth}
			\includegraphics[width=1.0\textwidth]{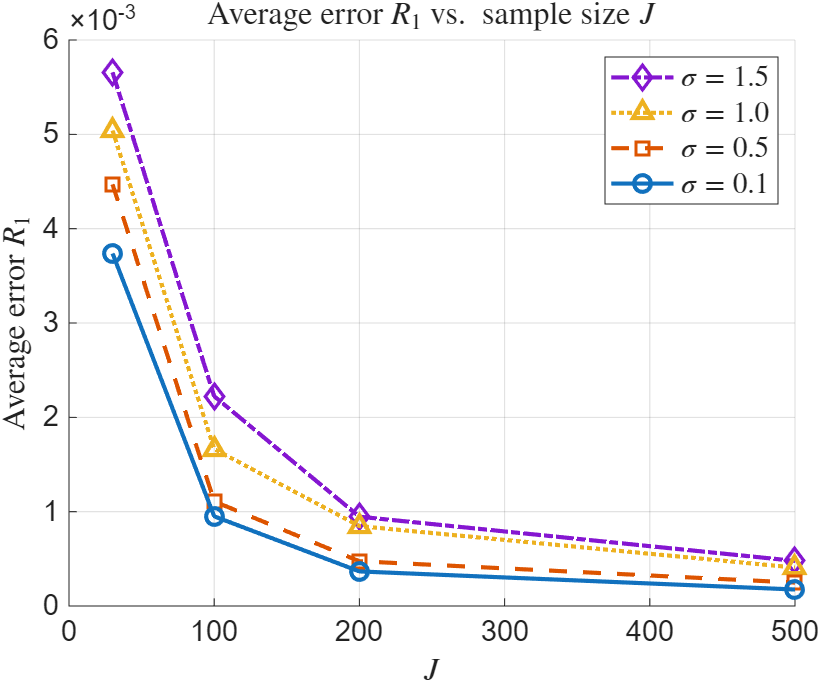}
		\end{minipage}}
\hspace{0.05in}
	\subfigure{
		\begin{minipage}[b]{0.45\textwidth}
			\includegraphics[width=1.0\textwidth]{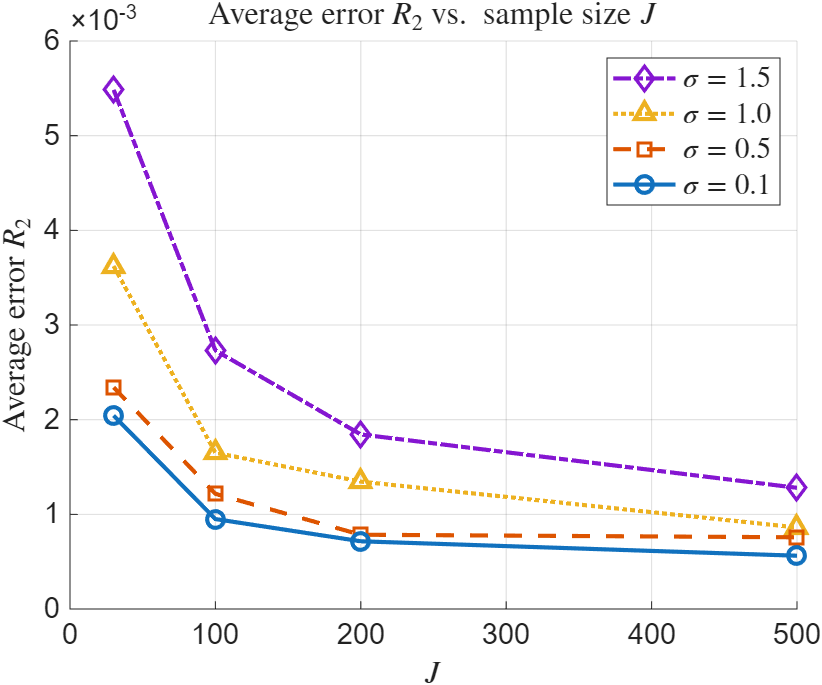}
		\end{minipage}}
		\caption{Decay of $\mathrm{R}_1$ and $\mathrm{R}_2$ with increasing $J$ and decreasing $\sigma$.}
\label{fig8}
\end{figure}

\section{Applications of Embodied Intelligence System}
In this section, we show the effectiveness of the DSVI-O with the time-stepping and SAA discrete scheme for delivering personalized health recommendations for elderly individuals aged 60 and above. The DSVI-O framework is specifically designed to provide real-time, individualized health insights through the analysis of multi-source health data, including wearable devices and medical records. Due to privacy constraints and the limited availability of continuous real-world health monitoring data, especially among elderly populations, this study utilizes synthetic data generation methodologies to rigorously evaluate the  performance of the DSVI-O with
the discrete scheme.

Synthetic data generation has gained significant recognition for its ability to replicate complex, real-world scenarios while preserving data privacy and enabling comprehensive algorithm testing \cite{giuffre2023harnessing,jordon2022synthetic,mccaw2024synthetic}. By creating diverse, physiologically plausible datasets that simulate realistic variations in elderly health conditions, the synthetic approach ensures robust, scalable validation of the proposed framework.
Figure~\ref{fig:app_DSBO} illustrates the application structure of the DSVI-O, highlighting the integration and processing of multi-source data streams to generate real-time personalized health recommendations tailored for elderly individuals.

\newcommand{\RowGap}{2.5cm}
\newcommand{\RowGapp}{2.2cm}
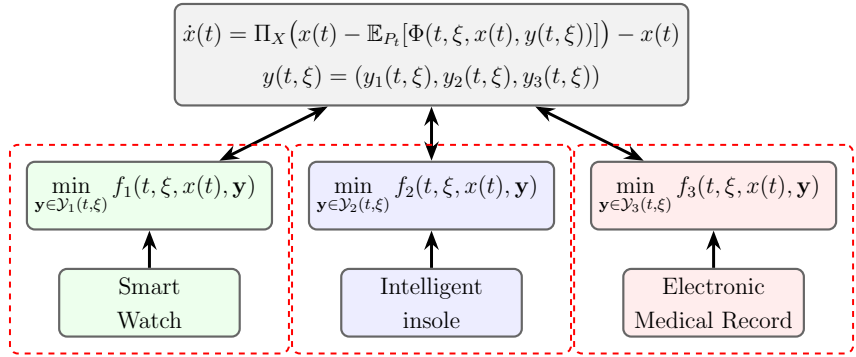
\begin{figure}[htbp]        
    \centering
    \scalebox{0.75}{  
\begin{tikzpicture}[
    >=Stealth,
    node style/.style={
        draw=black!60, line width=1.1pt,
        rectangle, rounded corners=4pt,
        align=center, inner sep=4pt
    },
    arrow/.style={-Stealth, line width=1.5pt, draw=black},
    line/.style={Stealth-Stealth, line width=1.5pt, draw=black},
    dashed group/.style={draw=red, dashed, line width=1.0pt, inner sep=8pt, rounded corners=4pt}
]
  \node[node style, fill=gray!10, minimum width=3.6cm, minimum height=1.8cm] (block0) at (0,0) {
    $\displaystyle \dot{x}(t)=\Pi_X\bigl(x(t)-\mathbb{E}_{P_t}[\Phi(t,\xi,x(t),y(t,\xi))]\bigr)-x(t)$\\[4pt]
    $\displaystyle y(t,\xi)=(y_1(t,\xi),y_2(t,\xi),y_3(t,\xi))$
  };

  \node[node style, fill=green!7, minimum width=3.2cm, minimum height=1.2cm] (block1) at (-5,-\RowGap) {
    $\displaystyle \min\limits_{\mathbf{y}\in\mathcal{Y}_1(t,\xi)} f_1(t,\xi,x(t),\mathbf{y})$
  };
  \node[node style, fill=blue!7, minimum width=3.2cm, minimum height=1.2cm] (block2) at (0,-\RowGap) {
    $\displaystyle \min\limits_{\mathbf{y}\in\mathcal{Y}_2(t,\xi)} f_2(t,\xi,x(t),\mathbf{y})$
  };
  \node[node style, fill=red!7, minimum width=3.2cm, minimum height=1.2cm] (block3) at (5,-\RowGap) {
    $\displaystyle \min\limits_{\mathbf{y}\in\mathcal{Y}_3(t,\xi)} f_3(t,\xi,x(t),\mathbf{y})$
  };

  \node[node style, fill=green!7, minimum width=3.2cm, minimum height=1.2cm] (data1) at (-5,-2*\RowGapp) {Smart\\ Watch};
  \node[node style, fill=blue!7, minimum width=3.2cm, minimum height=1.2cm] (data2) at (0,-2*\RowGapp)      {Intelligent\\ insole};;
  \node[node style, fill=red!7, minimum width=3.2cm, minimum height=1.2cm] (data3) at (5,-2*\RowGapp)      {Electronic\\ Medical Record};

  \draw[line] (block0) -- (block1);
  \draw[line] (block0) -- (block2);
  \draw[line] (block0) -- (block3);
  \draw[arrow] (data1) -- (block1);
  \draw[arrow] (data2) -- (block2);
  \draw[arrow] (data3) -- (block3);

  \node[dashed group] (group1) [fit=(block1)(data1)] {};
  \node[dashed group] (group2) [fit=(block2)(data2)] {};
  \node[dashed group] (group3) [fit=(block3)(data3)] {};
\end{tikzpicture}}
  \caption{DSVI-O workflow for personalized elderly health recommendations}  
    \label{fig:app_DSBO}
\end{figure}

\subsection{Data Generation}
The generated data encompass physiological parameters, clinical records, and wearable device outputs, such as insole sensor readings. This comprehensive yet controlled synthetic approach facilitates rigorous and scalable evaluation of the DSVI-O framework while effectively addressing data privacy and scarcity constraints.

\subsubsection{Hybrid Generation of Synthetic Health Data}

Traditional rule-based methods lack flexibility and realism, while deep learning approaches typically require large datasets and yield less interpretable results~\cite{RUJAS2025105763}. To address these limitations, our hybrid approach combines rule-based constraints, knowledge distillation from Large Language Models (LLMs), and stochastic mathematical modeling to ensure both realism and interpretability. Specifically, we employ:

\begin{enumerate}
\item Physiological rules (e.g., BMI sampled from $\mathcal{N}(22, 2.5^2)$, clipped to $[18.5, 30]$),
\item LLM (Qwen2.5-7B-Instruct) for flexible, guideline-based rule generation,
\item Stochastic mathematical modeling, using Gaussian distributions centered on clinical medians~\cite{halmich2025data}.
\end{enumerate}

\paragraph{Individual Attributes}
Age groups are 60–69 (50\%), 70–79 (30\%), 80+ (20\%). Gender ratio is 1:1. {Height follows age-adjusted distributions: $\mathcal{N}(170-0.1(\text{age}-60), 5^2)$ for men, $\mathcal{N}(160-0.1(\text{age}-60), 5^2)$ for women. Values are clipped to the range [145 cm, 190 cm]. Weight is calculated using the formula $\text{BMI} \times (\text{height}/100)^2$.}

\paragraph{Clinical Indicators}
Static EMRs (e.g., WBC, RBC, HGB, ...) are sampled from normal distributions around clinical medians, adjusted for age, gender, and BMI, within clinical limits.

\paragraph{Health State Groups}
Users are categorized into healthy, weak or ill groups (e.g., heart rate, SpO$_2$, sleep, steps, and calories), with specific indicator shifts (e.g., heart rate group: CRP +2, WBC +1.5), clipped within medical ranges.

\paragraph{Physiological Data}
The physiological data were generated at 5-second intervals, incorporating circadian rhythms, activity, and health status. The baseline heart rate was defined as
$
\text{HR} = 70 - 0.2(\text{age} - 60) + 2\,\mathbb{I}_{\{\text{male}\}} + 0.05(\text{SBP} - 120) + 0.01(\text{HGB} - 140),
$
and modulated by $\pm 8$ bpm based on sleep or activity state.
%

\paragraph{Insole Data}
The insole data were generated at 5-second intervals and include 17 indicators such as plantar pressures on four channels (left forefoot, left heel, right forefoot, right heel), step frequency, stride length, left–right symmetry, force estimation, and balance status. These indicators are influenced by factors such as weight, age, gait pattern, and abnormal conditions, with added Gaussian noise and values constrained within physiologically realistic ranges.


\subsubsection{Overview of the Generated Dataset}

 { The dataset consists of 10 individuals, with randomly generated data for 100 days. The possible health states for each individual include: healthy ({\color{green}0}), weak ({\bf\textcolor[rgb]{1,0.8,0.2}{1}}), and ill ({\color{red}2}), as well as transitional states between these categories, such as 0.1 or 1.6.
Each individual's daily health status evolves according to one of four temporal patterns: jump (sudden transitions, e.g., acute events), linear (gradual change, e.g., fatigue accumulation), exponential (rapid onset with saturation, e.g., drug response), and logistic (slow–fast–slow transitions, e.g., chronic recovery). These patterns describe transitions between health states 
over each 24-hour period. Labels are recorded every five seconds, yielding fine-grained time-series trajectories. Such structured modeling of heterogeneous transitions aligns with biomedical insights on personalized health trajectories \cite{jorgensen2024disease}.

Each person is represented by a series of features organized into multiple categories, each stored in a separate CSV file. All generated data are publicly available on GitHub at:
 \texttt{https://github.com/DSVI2025/DSVI-O}.

\begin{itemize}
    \item \texttt{user\_profiles.csv}: demographic attributes (user ID, age, gender, height, weight, BMI).
    \item \texttt{user\_groups.csv}: health trajectory types and group labels.
    \item \texttt{medical\_records.csv}: blood counts, metabolic panels, blood pressure, etc.
    \item \texttt{health\_data.csv}: daily metrics including heart rate, SpO$_2$, sleep quality, steps, calories, {skin temperature, hrv, stress index, sbp, dbp, activity intensity, active minutes, floors climbed activity, and status.}
    \item \texttt{insole\_data.csv}: foot pressure, {motion types,} and gait information from wearable insole devices.
\end{itemize}


\subsection{Data-Driven Health State Modeling via DSVI-O}

We apply DSVI-O ~\eqref{DSTO_first_stage}--\eqref{DSTO_second_stage} to dynamically model the evolution of an individual's health status over 24 hours. The first-stage state variable \( x(t) \in \mathbb{R}^n \) denotes the personalized health state of an elderly individual at time \( t \), evolving continuously under the influence of real-time monitoring data. The second stage incorporates inputs from three heterogeneous sources: smart watch signals, intelligent insole data, and EMR. Historical data from each source is used to continuously calculate the time-varying parameters in the ODE, enabling real-time tracking and personalized updates of the health trajectory.

To capture the multi–factor evolution of an elderly individual’s health quality, we model the latent state by an ODE assembled from three physiologically motivated components.

\paragraph{1. Self-dynamics model}
Following the frailty index framework in~\cite{rockwood2007frailty},  the self-evolution of an individual’s health status through an ODE is modeled as
$\dot{x}(t)
   = -A(t)x(t)$ $ -p(t),$
where $A(t)$ and $p(t)$ are damage–repair parameters.

\paragraph{2. Circadian rhythm (CR) model}
Wearable sensors now provide high-resolution, non-invasive activity traces, enabling large-scale modelling of individual CR patterns. We embed this circadian component in the ODE ~\eqref{DSTO_first_stage} to reflect
the 24-hours signal by a truncated Fourier series \cite{kim2023efficient}:
$$q(t,\xi)=
   \sum_{i=1}^{M} A_i(t)\,
   \sin\!\bigl(2\pi\zeta_i t+\phi_i(t)\bigr)
   +\varepsilon_{1}(t,\xi),$$
where
\(\phi_i(t)\) and \(A_i(t)\) are the phase and amplitude of the \(i\)-th harmonic, \(\varepsilon_1(t,\xi)\) aggregates noise and
\(\zeta_i\) sets the circadian frequencies.

\paragraph{3. Latent exogenous drive}
The health state can be perturbed by external factors such as physical activity, gait stability, or clinical interventions.
We represent these effects through a \emph{latent exogenous signal}
$\ell(t,\xi)=\ell_{0}+\Delta\ell_{1}(t)\,m(t,\xi),$
where $\ell_{0}$ is the long-run baseline and
$\Delta\ell_{1}(t)$ captures time-varying departures.  The composite score $m(t,\xi)$ is built from the latest
wearable-sensor read-outs via a linear fusion model
\[
m(t,\xi)=\sum_{i=1}^{3}\alpha_{i}(t)\,
         B_{i}(t,\xi)\,y_{i}(t,\xi)+\varepsilon_{2}(t,\xi),
\]
with $B_{i}(t,\xi)$ the $i$-th physiological channel,
$y_{i}(t,\xi)$ the physiological parameter, and $\alpha_{i}(t)$ an adaptive
weight.

Building upon the three components above, the latent health state \(x(t) \in X\) is modeled as the following
\begin{align}\label{eq:full_ode}
\dot{x}(t)=
\Pi_{X}\!\Bigl(
      x(t)-{A(t)\,x(t)-p(t)+\mathbb{E}_{P_t}\bigl[q(t,\xi)+\ell(t,\xi)\bigr]}
                     \Bigr)
      -x(t).
\end{align}
To evaluate \(m(t,\xi)\) in  \(\ell(t,\xi)\), we  solve the following constrained quadratic program:
\begin{align}\label{eq:full_opt}
y_i(t,\xi)\in
\mathop{\arg\min}_{\y\in[a_i(\xi),\,b_i(\xi)]}
f_i\bigl(t,\xi,x(t),\y\bigr), \quad i=1,2,3,
\end{align}
where
$
f_i(t,\xi,\x,\y)=
\tfrac{1}{2}
\bigl\|H_i(t,\xi)\,\y+N_i(t,\xi)\,\x-c_i(t,\xi)\bigr\|^{2}
+\tfrac{\rho_i}{2}\bigl\|B_i(t,\xi)\,\y-\x\bigr\|^{2}.
$
Here, the first term ensures that the model follows clinical priors, while the second term keeps the estimated state close to physiological features. These parameter estimates are then used to compute \(m(t,\xi)\) and later \(\ell(t,\xi)\), forming a feedback loop in the projected ODE.
In addition, the matrices \(H_i(t,\xi) \in \mathbb{R}^{n_i \times m_i}\) and \(c_i(t,\xi) \in \mathbb{R}^{n_i}\) are built from the individual's historical data, where \(H_i\) collects past wearable and physiological signals, and \(c_i\) gives the related clinical labels. The correction matrix \(N_i(t,\xi) \in \mathbb{R}^{n_i \times n}\) helps reduce the effect of past data that are not similar to the current state.

The following conditions and Assumption \ref{A1} ensure the existence of a weak solution to \eqref{eq:full_ode}--\eqref{eq:full_opt}.
\begin{enumerate}[label=\textbf{(B\arabic*)},leftmargin=0.4in]
\item \label{B1}
The mappings \( A : [0,T] \to \mathbb{R}^{n \times n} \), \( p : [0,T] \to \mathbb{R}^{n} \), \( q : [0,T] \times \Xi \to \mathbb{R}^{n} \), and \( \ell : [0,T] \times \Xi \to \mathbb{R}^{n} \) are continuous.

\item\label{B2}
The mappings $\mathbb{E}_{P_t}[\varepsilon_j(t,\xi)]$ for $j=1,2$ and
$\mathbb{E}_{P_t}[B_i(t,\xi)]$ for $i=1,2,3$ are integrable on $[0,T]$,
and
$
\sup_{t\in[0,T]}
\mathbb{E}_{P_t}\left[\|\varepsilon_j(t,\xi)\|\right] < \infty,$ for $j=1,2$, and $
\sup_{t\in[0,T]}
\mathbb{E}_{P_t}[\|B_i(t,\xi)\|] < \infty
$ for $i=1,2,3$.

\item\label{B3}
The functions \( a_i : \Xi \to \mathbb{R}^{m_i} \) and \( b_i : \Xi \to \mathbb{R}^{m_i} \) for $i=1,2,3$ are continuous and uniformly bounded.
\end{enumerate}

It is easy to verify \textbf{(A2)}--\textbf{(A4)} hold with
$
\kappa_{\Phi_1}(t,\xi)=\|A(t)\|,$ $
\kappa_{B_i}(t,\xi)=0,
$ and
$
\bar\beta=\sup_{t\in[0,T]}
\bigl\{\|A(t)x_0\|+\|p(t)\|+\ell_0+|\Delta \ell_1(t)|\max_{i\in[3]}\mathbb \alpha_i(t)\bbe_{P_t}[\|B_i(t,\xi)\|]+\mathbb E_{P_t}[\|q(t,\xi)\|]+|\Delta \ell_1(t)|\bbe_{P_t}[\|\varepsilon_2(t,\xi)\|]\bigr\}.$ Thus, by Theorem~\ref{thm:existence}, \eqref{eq:full_ode}--\eqref{eq:full_opt} has at least one weak solution on $[0,T]$ for any $T>0$.

\subsection{Numerical Experiment Setup}
We use the DSVI-O model (\ref{eq:full_ode})-(\ref{eq:full_opt}) to simulate the evolution of the health status of 10 persons over 10 days by using their health data in the past 90 days. Within every 24-hour cycle, measurements were sampled at 5-second intervals, yielding 17,280 time points per day. We consider three types of multimodal data: smart watch signals (14 features), intelligent insole measurements (17 features), and electronic medical records (30 features) as illustrated in Figure \ref{fig:app_DSBO}.

For each person, the following three matrices and three vectors contain his/her health features and corresponding labels for $\nu=1, \ldots, 17280$:
\begin{itemize}
  \item \texttt{smart watch} \hspace*{4.6em}
  $Z_1(t_\nu) \in \mathbb{R}^{100 \times 14}, \quad \eta_1(t_\nu) \in \mathbb{R}^{100}$

  \item \texttt{intelligent insole} \hspace*{1em}
  $Z_2(t_\nu) \in \mathbb{R}^{100 \times 17}, \quad \eta_2(t_\nu) \in \mathbb{R}^{100}$

  \item \texttt{medical records} \hspace*{2.6em}
  $Z_3(t_\nu) \in \mathbb{R}^{100 \times 30}, \quad \eta_3(t_\nu) \in \mathbb{R}^{100}$.
\end{itemize}

At each time $t_\nu$, from the first 90 rows of $Z_i(t_\nu)$, we randomly and independently chose 50 rows 20 times.  This yields sampled matrices $H_i\left(t_\nu, \xi^\iota\right) \in \mathbb{R}^{50 \times m_i}$ and corresponding labels $c_i\left(t_\nu, \xi^\iota\right) \in \mathbb{R}^{50}$,  $\iota=1, \ldots, 20$ and $i=1,2,3$ with $m_1=14$, $m_2=17$ and $m_3=30.$

We simulate the health dynamics of a single person over one day.
For each day $j\in[10]$, we extract the $j$-th day's data for the person
$
B_i^j(t_\nu) := \left(Z_i(t_\nu)\right)_{[90 + j,\,:]}$, $i\in[3],\ \nu\in[17280].$
This process is repeated across $j=1,\dots,10$ to obtain 10 simulated trajectories for the same person.

To reduce the influence of dissimilar historical data, we define a correction vector \(N_i^j(t_\nu, \xi^\iota) \in \mathbb{R}^{50}\) based on softmax-normalized Gaussian similarities. Specifically, given the test feature vector \(B_i^j(t_\nu)\) and the historical data matrix \(H_i(t_\nu, {\xi}^\iota)\), we compute
\begin{equation} \label{eq:correction_matrix}
N_i^j(t_\nu, \xi^\iota) = \lambda_i \left( \mathbf{1}_{50} -
\texttt{softmax} \left( -\frac{1}{h_i^2} \cdot
\texttt{pdist2}\left( B_i^j(t_\nu),\, H_i(t_\nu, {\xi}^\iota) \right)
\right) \right),
\end{equation}
with \(h_i\) being defined as the median of all pairwise distances between rows of \(H_i(t_\nu, \xi^\iota)\):
$$h_i = \operatorname{median} \left\{ \| (H_i(t_\nu, \xi^\iota))_{[r,:]} - (H_i(t_\nu, \xi^\iota))_{[s,:]} \| \;\middle|\; 1 \le r < s \le 50 \right\}.$$
The \texttt{pdist2} computes a vector in $\bbr^{50}$ whose the $r$th component is the squared Euclidean distance between \(\mathbf{B} \in \mathbb{R}^{1 \times m_i}\) and the $r$th row of \(\mathbf{H} \in \mathbb{R}^{50 \times m_i}\):
$
(\texttt{pdist2}(\mathbf{B}, \mathbf{H}))_{r} = \sum_{k=1}^{m_i}
\left( \mathbf{B}_{[1,k]} - \mathbf{H}_{[r,k]} \right)^2,
$ $r=1,\ldots,50$.
For a vector \(\mathbf{v} \in \mathbb{R}^{50}\), the \texttt{softmax}$(\mathbf{v}) \in \mathbb{R}^{50}$ is defined by
$
(\texttt{softmax}(\mathbf{v}))_{r} =
\frac{e^{v_r}}{\sum_{k=1}^{50} e^{v_k}},
$ $r=1,\ldots,50$.

As a result, the correction matrix \(N_i(t_\nu, {\xi}^\iota)\) downweights historical samples that are dissimilar to the current observation \(B_i^j(t_\nu)\), thereby reducing their influence during estimation.

All simulations use the following parameters:
$A(t) \!\equiv\! \exp(-3)$,
$p(t) \!\equiv\! 0.01$,
$q(t,\xi) = q(t)
= 0.15 \sin\!\bigl(2\pi \cdot 0.025t\bigr)
 + 0.17 \sin\!\bigl(2\pi \cdot 0.021t\bigr),$
$\varepsilon_{2}(t,\xi) \equiv 0,$
$\ell_0 = 0.02$,
$\Delta \ell_1(t) \equiv 1.5$,
$\alpha_1(t)\equiv 0.4,$
$\alpha_2(t)\equiv 0.4,$
$\alpha_3(t)\equiv 0.2,$
$\lambda_i = 0.50$, and
$\rho_i = 5$ for $i = 1,2,3$.
The initial state is defined as $\mathbf{x}_0^j = \eta_1(t_0)_{[90+j]}$ for each $j \in [10]$.
Overall, for each day \(j \in[10]\) and time step \(\nu = 0,\dots,17279\), the latent health state \(\mathbf{x}_\nu^j \in [0,2]\) evolves as
\begin{equation} \label{eq:health_dynamics}
\mathbf{x}_{\nu+1}^j =
\Pi_{[0,2]}\!
\left(
      \mathbf{x}_\nu^j
      - A(t_\nu)\,\mathbf{x}_\nu^j-p(t_\nu)+q(t_\nu)
      + \frac{1}{20}\sum_{\iota=1}^{20}\ell^j(t_\nu,{\xi}^\iota)
\right),
\end{equation}
where \(\Pi_{[0,2]}\) denotes projection onto the interval \([0,2]\), and
$$\ell^j(t_\nu,{\xi}^\iota)=\ell_{0}+\Delta\ell_{1}(t_\nu)\,\left(\sum_{i=1}^{3}\alpha_{i}(t_\nu)\,
         B^j_{i}(t_\nu)\,\y_{i}^j(t_\nu,{\xi}^\iota)+\varepsilon_{2}(t_\nu,\xi^\iota)\right).$$
At each \(t_\nu\), the parameter \(\mathbf{y}_i^j(t_\nu,\xi^\iota)\) for \(i=1,2,3\) solves
\begin{equation*}
\begin{aligned}
\min_{\mathbf{y}_i \in [-10,10]^{m_i}} \frac{\rho_i}{2} \left\| B_i^j(t_\nu) \mathbf{y}_i\! -\! \mathbf{x}_\nu^j \right\|^2 \! +\! \frac{1}{2} \left\| H_i(t_\nu,{\xi}^\iota) \mathbf{y}_i
\!-\! N_i^j(t_\nu,{\xi}^\iota)\mathbf{x}_\nu^j
\!-\! c_i(t_\nu,{\xi}^\iota) \right\|^2.
\end{aligned}
\end{equation*}
Then, the predicted health state is discretized as
\begin{equation} \label{eq:health_prediction}
\text{Pred}_\nu^j =
\begin{cases}
0, & \text{if } \mathbf{x}_\nu^j \le \frac{2}{3}, \\
1, & \text{if } \frac{2}{3} < \mathbf{x}_\nu^j < \frac{4}{3}, \\
2, & \text{if } \mathbf{x}_\nu^j \ge \frac{4}{3}.
\end{cases}
\end{equation}
In addition, the real health state trajectories are set as:
\begin{equation} \label{eq:health_dynamics_true}
\tilde{\mathbf{x}}_{\nu+1}^{j} =
\Pi_{[0,2]}\!
\left(
      \tilde{\mathbf{x}}_\nu^j
      - A(t_\nu)\,\tilde{\mathbf{x}}_\nu^j-p(t_\nu)+q(t_\nu)
      + \ell_0+\Delta\ell_1(t_\nu)\sum_{i=1}^{3}\alpha_i(t_\nu)\eta_i(t_\nu)_{[90+j]}
\right).
\end{equation}

Finally, to assess generalizability, the entire procedure above is replicated for 10 different persons.

\subsection{Numerical Experiment Result}
Following the experimental setup described above, we now present the evaluation outcomes. 
{We adopt an individual-wise testing strategy to evaluate the last 10 days of data for each of the 10 individuals in the test set separately.} Performance metrics are computed at the individual level and then averaged to obtain the overall performance. This dual perspective allows us to assess both personalized and aggregate behavior of the DSVI-O.
\subsubsection{Performance Metrics}

To quantitatively assess the performance of our classification model, we employ a comprehensive set of evaluation metrics: \textbf{Accuracy}, \textbf{Precision}, \textbf{Recall}, \textbf{Specificity}, and \textbf{F1-score}~\cite{hossin2015review}. These metrics are computed based on the following quantities for each class $i$ ($i = 0, 1, 2$):
\begin{itemize}
  \item \textbf{True Positives ($TP_i$)}: Number of samples correctly predicted as class $i$.
  \item \textbf{True Negatives ($TN_i$)}: Number of samples correctly predicted as not belonging to class $i$.
  \item \textbf{False Positives ($FP_i$)}: Number of samples incorrectly predicted as class $i$.
  \item \textbf{False Negatives ($FN_i$)}: Number of samples incorrectly predicted as not class $i$.
\end{itemize}

Based on these definitions, we compute the following \emph{per-class metrics}, each quantifying a distinct aspect of model performance:

\begin{itemize}
  \item \textbf{Accuracy}: Measures the overall proportion of correctly classified instances among all predictions
$$
    \text{Accuracy}(i) = \frac{TP_i + TN_i}{TP_i + TN_i + FP_i + FN_i}.
$$

  \item \textbf{Precision}: Indicates the proportion of positive predictions that are actually correct
$$
    \text{Precision}(i) = \frac{TP_i}{TP_i + FP_i}.
$$

  \item \textbf{Recall}: Represents the proportion of actual positives correctly identified
$$
    \text{Recall}(i) = \frac{TP_i}{TP_i + FN_i}.
$$

  \item \textbf{Specificity}: Quantifies the proportion of actual negatives correctly identified
$$
    \text{Specificity}(i) = \frac{TN_i}{TN_i + FP_i}.
$$

  \item \textbf{F1-score}: Provides a harmonic mean of precision and recall
$$
    F1(i) = \frac{2 \cdot \text{Precision}_i \cdot \text{Recall}_i}{\text{Precision}_i + \text{Recall}_i}.
$$
\end{itemize}




\subsubsection{Individual-wise Performance}

Figure~\ref{fig:matrix} presents the confusion matrices for one person across ten testing days, offering a detailed view of the DSVI-O framework's performance at the subject level. Accuracy remains consistently above 96\% throughout. 
 Most errors occur when the true label is \textit{weak} ({\color[rgb]{1,0.8,0.2}1}) but predicted as \textit{healthy} ({\color{green}0}), suggesting that the model is more conservative in identifying early or mild degradations. On Day 9, the main misclassification pattern shifts, with \textit{healthy} ({\color{green}0}) more often predicted as \textit{weak} ({\color[rgb]{1,0.8,0.2}1}), possibly due to temporal variations in physiological signals. While the model reliably captures major health trends, day-specific dynamics may influence its sensitivity to borderline states, especially during transitions.


  \begin{figure}
    \centering
    \includegraphics[width=1\textwidth]{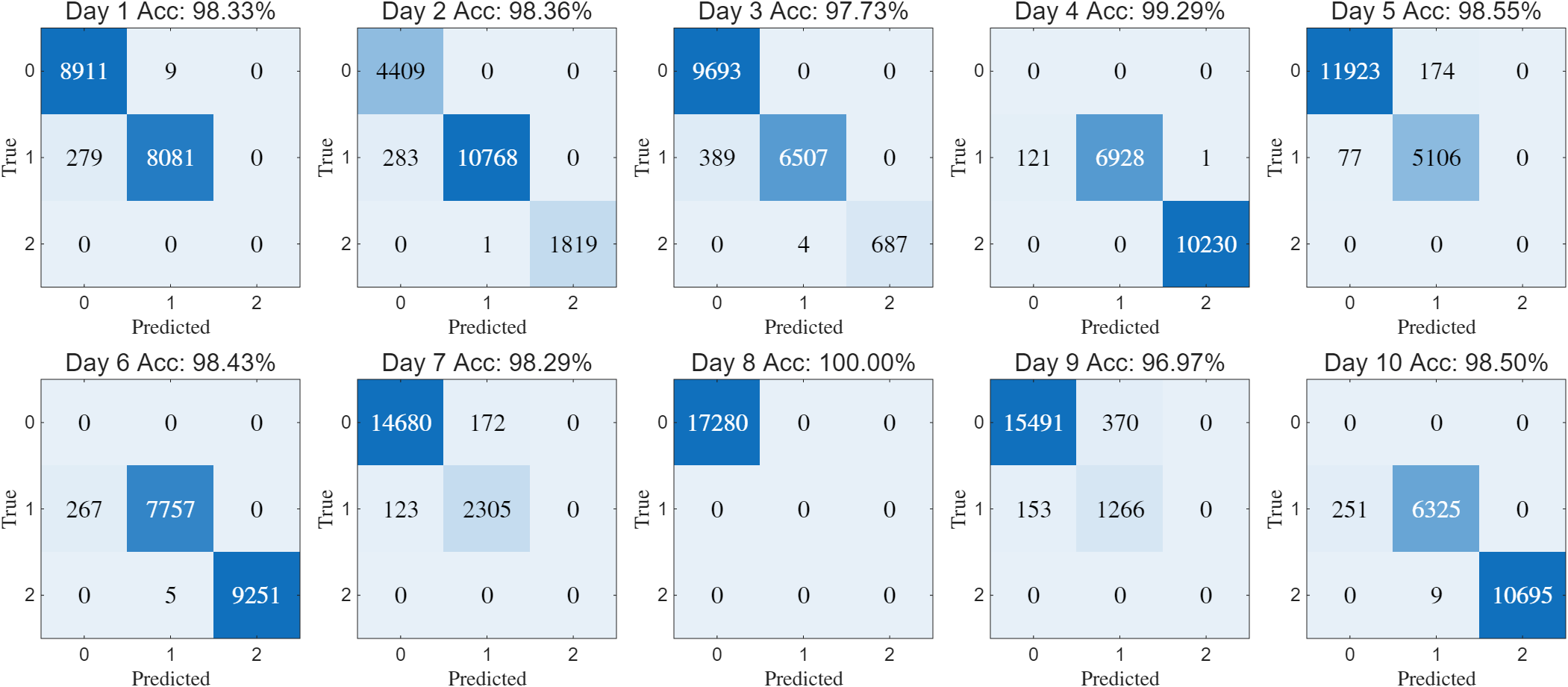}
    \caption{Confusion matrix comparing true and predicted labels for one person in 10 days.}
  \label{fig:matrix}
  \end{figure}

Figure~\ref{fig:curve} illustrates the health state trajectories $\tilde{\x}_\nu^j$ in \eqref{eq:health_dynamics_true}  and the real-time predictions $\x_\nu^j$ in \eqref{eq:health_dynamics} for one individual over 10 representative days in the test set. For each day, only a local segment containing a state transition is shown. The horizontal axis denotes time, allowing for fine-grained comparison between the true and predicted health states. From these trajectories, we observe that the model responds well to changes in physiological signals and accurately captures transitions among \textit{healthy}, \textit{weak}, and \textit{ill} states. In general, the model tracks health trends effectively, with most predicted trajectories aligning well with ground truth. Sudden state changes are typically detected in real time. In contrast, for gradual transitions, the model often fails to react immediately when the true state begins to shift but successfully identifies the change once the deterioration accumulates. This behavior reflects the model's reliance on temporal evidence aggregation when detecting slow-onset degradations.

  \begin{figure}
    \centering
    \includegraphics[width=0.9\textwidth]{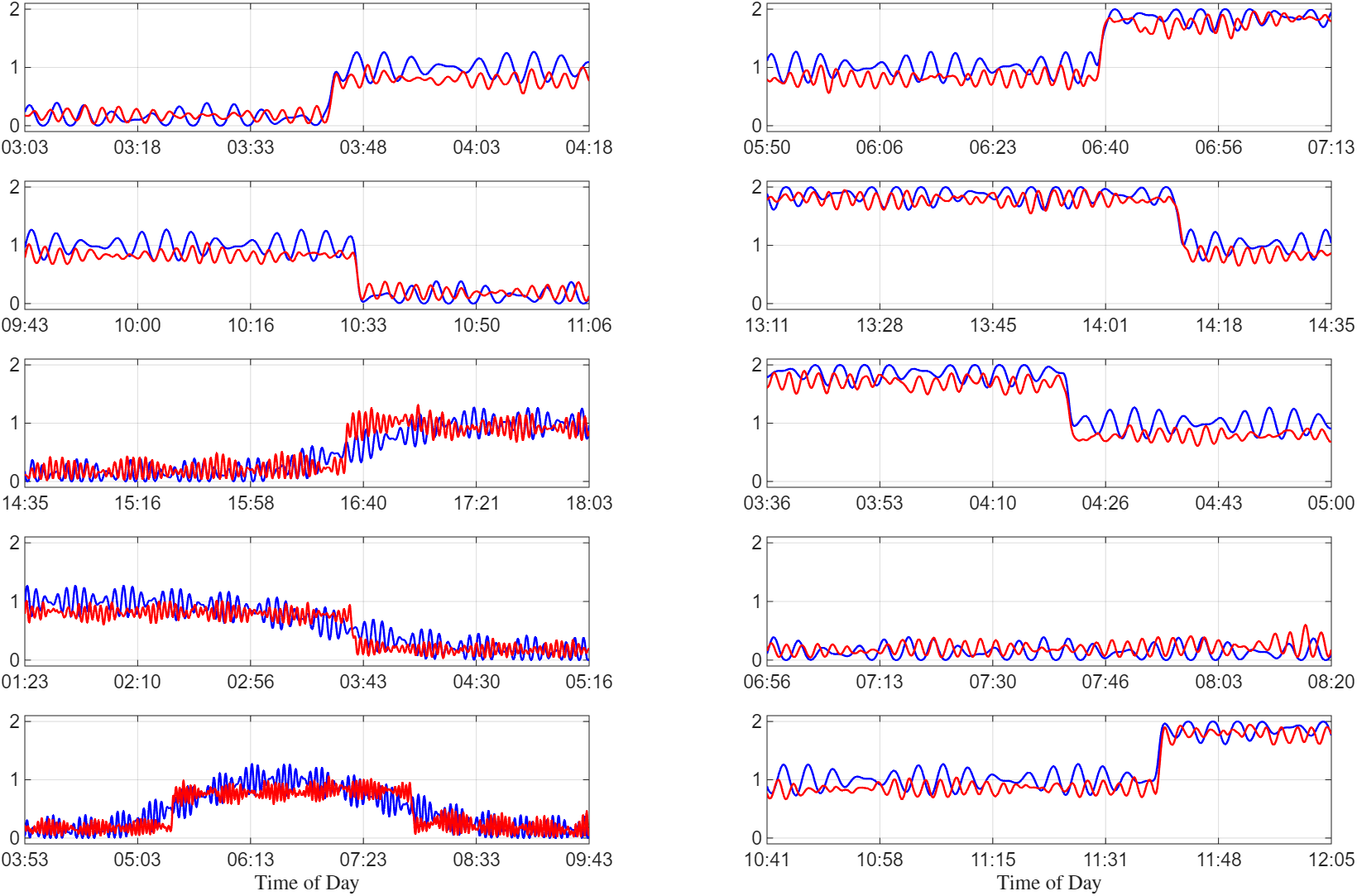}
    \caption{Tracked one person's health curve: Estimated ({\color{red}red}) $\hat{x}(t)$ vs True ({\color{blue}blue}) $x(t)$.}
  \label{fig:curve}
  \end{figure}

\subsubsection{Overall Performance}

Figure~\ref{fig:matrix2} presents the aggregated confusion matrix for all 10 individuals in the test set, offering a comprehensive summary of the model's classification performance across the three health states—\textit{healthy} ({\color{green}0}), \textit{weak} ({\color[rgb]{1,0.8,0.2}1}), and \textit{ill} ({\color{red}2}). The diagonal entries indicate correctly classified instances, while off-diagonal elements represent misclassifications. Over a total of 1,728,000 time points (10 individuals over 10 days), the model achieves an overall accuracy of 98.18\%. Among all predictions, only 53 instances of class \textit{healthy} ({\color{green}0}) are incorrectly predicted as \textit{ill} ({\color{red}2}), and no instances of class \textit{ill} ({\color{red}2}) are misclassified as \textit{healthy} ({\color{green}0}). Since these two classes lie at opposite ends of the health spectrum with \textit{weak} ({\color[rgb]{1,0.8,0.2}1}) in between, this result highlights the model’s robustness in distinguishing distant health conditions and avoiding severe misclassifications.

 \begin{figure}
    \centering
    \includegraphics[width=0.75\textwidth]{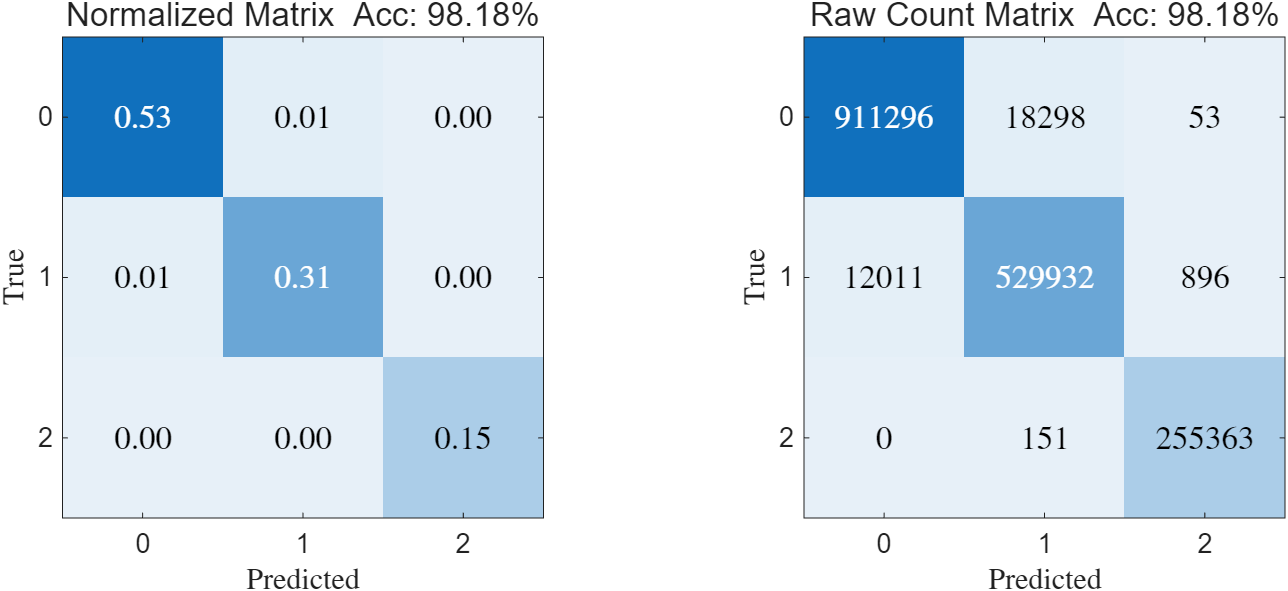}
    \caption{Confusion matrix of true and predicted labels for ten persons.}
  \label{fig:matrix2}
  \end{figure}


Table~\ref{tab:combined-eval} reports the class-wise evaluation metrics for the three health states: \textit{healthy} ({\color{green}0}), \textit{weak} ({\color[rgb]{1,0.8,0.2}1}), and \textit{ill} ({\color{red}2}). The results demonstrate that the model achieves strong performance across all classes, with overall accuracy above 98\% for each. In particular, the model performs exceptionally well in identifying the \textit{ill} state, achieving a Precision of 0.9963, Recall of 0.9994, and an F1-score of 0.9979. This indicates both high sensitivity and specificity in detecting critical health deterioration. The \textit{healthy} class also shows robust metrics, with an F1-score of 0.9836. In contrast, the \textit{weak} class exhibits slightly lower Precision (0.9664) and Recall (0.9762), suggesting that transitional or mild health changes may be more difficult to detect. These observations highlight a potential direction for model refinement, particularly in improving sensitivity to intermediate health states. 

\begin{table}[ht]
\centering
\caption{Class-wise evaluation results for ten persons}
\label{tab:combined-eval}
\begin{tabular}{lccccc}
\toprule
\textbf{Type (i)} & \textbf{Accuracy} & \textbf{Precision} & \textbf{Recall} & \textbf{Specificity} & \textbf{F1-Score} \\
\midrule
healthy ({\color{green}0})          & 0.9824 & 0.9870 & 0.9803 & 0.9850 & 0.9836 \\
weak ({\color[rgb]{1,0.8,0.2}1})    & 0.9819 & 0.9664 & 0.9762 & 0.9844 & 0.9713 \\
ill ({\color{red}2})                & 0.9994 & 0.9963 & 0.9994 & 0.9994 & 0.9979 \\
\bottomrule
\end{tabular}
\end{table}

%
%
%

\section{Conclusions}\label{se:conclusions}
This paper systematically studies fundamental solution properties of the  differential stochastic variational inequality with parametric convex optimization (DSVI-O) (\ref{DSTO_first_stage})-(\ref{DSTO_second_stage}). The DSVI-O
is closely related to (DSVI), (DVI), (SVI) and (OCODE) and has advantages in handling data from different sources in different forms simultaneously. We prove that  the DSVI-O has at least one weak solution under standard assumptions.  Moreover, we prove the DSVI-O has at least one classic solution when the objective functions in the parametric convex optimization problems (\ref{DSTO_second_stage}) are strongly convex.
The uniqueness of the classical solution of the DSVI-O is shown in Remark \ref{remark_unique}
 for a special class of strongly convex unconstrained parametric optimization problems in (\ref{DSTO_second_stage}).
To find a numerical solution, we propose a discrete scheme of
DSVI-O by using a time-stepping approximation and  the SAA. We prove the convergence of the discrete scheme, and demonstrate the efficiency.  We apply the DSVI-O with the discrete scheme to an embodied intelligence system for the elderly health and generate synthetic health care data by Multimodal Large Language Models
to test the performance. Preliminary numerical results show the efficiency and effectiveness of the DSVI-O.

\vspace{0.02in}

{\bf Acknowledgement} We are grateful to Prof. Terry Rockafellar for his helpful discussion on solution properties of parametric convex optimization problems and his
comments on Lipschitz continuity of strongly convex unconstrained optimization problems in Remark 2.5.

\bibliographystyle{siamplain}
\bibliography{references}

\end{document}